\newtheorem{thm}{Theorem}[section]
\newtheorem{theorem}{Theorem}
\newtheorem{cor}[thm]{Corollary}
\newtheorem{lm}[thm]{Lemma}
\newtheorem{pr}[thm]{Proposition}
\theoremstyle{definition}
\newtheorem{df}[thm]{Definition}
\theoremstyle{remark}
\newtheorem{rem}[thm]{Remark}
\author{Karol Duda and Aleksander Ivanov}
\title{Computable F\o lner sequences of amenable groups}
\date{}
\begin{document}
\maketitle

\begin{abstract}

The paper considers computable F\o lner sequences in  computably enumerable amenable groups.  
We extend some basic results of M. Cavaleri on existence of such sequences to the case of groups where finite generation is not assumed. 
We also initiate some new directions in this topic, for example complexity of families of effective F\o lner sequences.
Possible extensions of this approach to metric groups are also discussed. 
\end{abstract}

\section{Introduction} 
Analysis of classical mathematical topics  from the point of view of complexity of various types is one of the major trends of modern mathematical logic. 
Amenability is essentially fruitful from this point of view
(see \cite{BK},  \cite{CK},\cite{HPP}, \cite{HKP1}, \cite{HKP2},  \cite{kechrisN}, \cite{KPT},
\cite{MU} ).
Our research belongs to computable amenability. 
This is a topic where computable versions of fundamentals of amenability are studied, see papers of M. Cavaleri \cite{MC2, MC3}, N. Moryakov \cite{mor} and the authors \cite{DuI, CPD}. 
Let us also mention \cite{sim} initiating a very rich field where computability meets topological dynamics. 

In the present paper we return to the results of M. Cavaleri from \cite{MC3}, which are now considered as the beginning of the topic. 
It has been shown in \cite{MC3} that amenable finitely generated recursively presented groups 
have computable Reiter functions and subrecursive F\o lner functions.
Furthermore, for such a group decidability of the word problem is equivalent to so called {\em effective amenability}, i.e.  
existence of an algorithm which finds $\frac{1}{n}$-F\o lner sets for all $n$.

Since being finitely generated is not necessary for amenability, the question arises what happens 
if we consider the case of recursively presented groups without the assumption of finite generation.
According to the approach of computable algebra, the question concerns the class of {\em computably enumerable numbered groups} and 
the subclass of {\em computable numbered groups}, a counterpart of decidability of the word problem.
These notions are thoroughly discribed in Section 2. 
The following theorem generalizes aforementioned results of Cavaleri to the case of computably enumerable numbered groups. 

\begin{theorem} \label{1} 
Let $(G,\nu )$ be a computably enumerable numbered group.
The following conditions are equivalent:
\begin{enumerate}[(i)]
\item $G$ is amenable;
\item $(G, \nu )$ has computable Reiter functions;
\item $(G, \nu )$ has subrecursive F\o lner function.
\item $(G, \nu )$ is $\Sigma$-amenable (see Definition \ref{cea}).
\end{enumerate}
Furthermore, computable amenability of $(G, \nu )$ is equivalent to  computability of it.
\end{theorem}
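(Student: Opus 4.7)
The idea is to follow Cavaleri's scheme from \cite{MC3} but replace every use of a fixed finite symmetric generating set and of decidable equality by the numbering $\nu$ and its c.e.\ (but not necessarily decidable) equality relation $\{(i,j) : \nu(i)=\nu(j)\}$. Throughout, I would work with finitely supported rational-valued functions $f:G\to\mathbb{Q}_{\ge 0}$ represented as finite lists of pairs $(i,q_i)$ with $i\in\mathbb{N}$ (an $\nu$-index) and $q_i\in\mathbb{Q}$, and with finite tuples $(i_1,\dots,i_k)$ representing candidate Følner sets $\{\nu(i_1),\dots,\nu(i_k)\}$. The main obstacle throughout is that $\ell^1$-norms such as $\|g\cdot f - f\|_1$ or symmetric-difference ratios $|gF\triangle F|/|F|$ cannot be evaluated exactly from such a representation; we can only semi-decide equality.

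For (i)$\Rightarrow$(ii), given $n$ I would dovetail two searches. The first enumerates candidate rational functions supported on indices $\le m$ for growing $m$; the second enumerates longer and longer finite initial segments $E_s$ of the c.e.\ graph of $\nu$-equality. For each candidate $f$ and each $E_s$, I would ask: does the multiset obtained by collapsing indices according to $E_s$ (with strict slack $1/n^2$, say) already certify the Reiter inequality $\|\nu(k)\cdot f - f\|_1 < \tfrac{1}{n}\|f\|_1$ for every $k\le n$? Any genuine Reiter function that exists by amenability will eventually be verified in this way, because the relevant inequalities are open and the true equalities are enumerated in finite time; this is exactly where c.e.\ (not computable) presentation suffices. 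The step (ii)$\Rightarrow$(iii) is the classical Namioka layer-cake argument applied to level sets $\{x:f_n(x)>t\}$ of a computable Reiter function; the time bound of the algorithm producing $f_n$ directly yields a subrecursive upper bound on the supports and cardinalities of the resulting Følner sets, hence on the Følner function.

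For (iii)$\Leftrightarrow$(iv) I would use Definition \ref{cea} to unpack $\Sigma$-amenability as a c.e.-witnessed production of $\tfrac{1}{n}$-Følner tuples: a subrecursive Følner function gives a computable bound inside which to run the same dovetailed verification as in (i)$\Rightarrow$(ii) adapted to sets rather than functions, while conversely any $\Sigma$-amenability witness produces a Følner set whose index in the enumeration is itself a subrecursive bound on the Følner function. The implication (iv)$\Rightarrow$(i) is trivial because the witnesses are genuine Følner sets.

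For the furthermore, one direction is direct: if $(G,\nu)$ is computable, then $\nu$-equality is decidable, so the dovetailed searches above become plain bounded searches and the output of (i)$\Rightarrow$(ii) (or the Følner-set version) is computable uniformly in $n$, i.e.\ $(G,\nu)$ is computably amenable. The converse is the delicate direction. Given an effective procedure producing, for each $n$, a tuple $(i_1,\dots,i_k)$ representing a $\tfrac{1}{n}$-Følner set, I would decide $\nu(i)=\nu(j)$ as follows: fix $n$ much larger than $i,j$, compute $F_n=(i_1,\dots,i_k)$, and use the fact that the permutation of $F_n$ induced by left multiplication by $\nu(i)\nu(j)^{-1}$ must move at most a $\tfrac{1}{n}$-fraction of elements. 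Combining this with the c.e.\ enumeration of equalities up to a computable stage large enough to see all equalities among elements appearing in products $\nu(i_a)\nu(j)^{-1}\nu(i_b)$ forces a decision: either the enumeration certifies $\nu(i)=\nu(j)$, or the Følner inequality is violated, which by its semi-decidable failure gives a certificate of $\nu(i)\ne\nu(j)$. Making this last dichotomy quantitatively tight is, I expect, the main technical obstacle of the whole theorem, as it is the non-finitely-generated analogue of Cavaleri's ``effective amenability implies decidable word problem.''
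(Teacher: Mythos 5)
Your treatment of the equivalence of (i)--(iv) is essentially the paper's: the dovetailed search over finitely supported rational functions together with growing initial segments of the c.e.\ equality relation is exactly the procedure $\varkappa(n,D,f)$ of Lemma \ref{rr}, whose soundness rests on the monotonicity $M^{x}_{E,F}(f)\ge M^{x}_{P,F}(f)$ for $E\subseteq P$, and the extraction of F\o lner sets from Reiter functions is Lemma \ref{fr}(iv). One caution there: your step from a subrecursive F\o lner function to $\Sigma$-amenability proposes running ``the same dovetailed verification adapted to sets rather than functions'', but the ratio $|\nu(F)\setminus \nu(x)\nu(F)|/|\nu(F)|$ is \emph{not} monotone as new equalities are enumerated (both numerator and denominator can drop), so F\o lnerness of a set is not obviously semi-decidable in the way $\frac{1}{n}$-invariance of $\nu_{G,1}(f)$ is; the clean route is to close the cycle through Reiter functions, whose supports serve directly as the $\Sigma$-amenability witnesses, as the paper does.

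The genuine gap is in the direction ``computably amenable $\Rightarrow$ computable'', which you rightly flag as the main obstacle but for which your sketch would not work. First, a $\frac{1}{n}$-F\o lner set $F$ for $g$ satisfies $|F\cap gF|\ge(1-\frac{1}{n})|F|$; it does not follow that left multiplication by $g$ fixes most elements of $F$ (in $\mathbb{Z}$ the interval $[-n,n]$ is F\o lner for the generator, yet translation by $1$ moves every point), so the dichotomy ``moves at most a $\frac{1}{n}$-fraction versus moves everything'' cannot be read off. Second, there is no ``computable stage large enough to see all equalities'' among a given finite set of words in a merely c.e.\ group --- if such a stage were computable, equality would already be decidable, which is what you are trying to prove. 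Third, if $\nu(i)\ne\nu(j)$ no F\o lner inequality is violated, and in any case failure of the F\o lner inequality is a negative condition, not a semi-decidable one. The paper's mechanism (proof of Theorem \ref{ce}) is different: for $E=\{n_1,n_2\}$ take $F$ with $\nu(F)\in \mathfrak{F} \o l_{G,\nu(E)}(3)$ and, crucially, $|F|=|\nu(F)|$; enumerate $\mathsf{Wrd}^{=}_{\nu}$ until, for each $i$, the discovered pairs $(f,f')$ with $\nu(n_i)\nu(f)=\nu(f')$ certify $|\nu(F)\cap\nu(n_i)\nu(F)|\ge\frac{2}{3}|F|$ (a recognizable stopping condition that is guaranteed to occur). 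Since each domain then has size greater than $\frac{1}{2}|F|$, the pigeonhole principle yields a common $f$ with $\nu(n_1)\nu(f)=\nu(f')$ and $\nu(n_2)\nu(f)=\nu(f'')$; if $f'=f''$ as numbers then $\nu(n_1)=\nu(n_2)$, while if $f'\ne f''$ then injectivity of $\nu$ on $F$ forces $\nu(f')\ne\nu(f'')$ and hence $\nu(n_1)\ne\nu(n_2)$. It is the requirement $|F|=|\nu(F)|$ in Definition \ref{ca} --- which your sketch never uses --- that converts an inequality of indices into an inequality of group elements and thus supplies the otherwise unobtainable negative certificates.
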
 
\noindent 
This theorem summarizes our results of Section 3. 

In the second part of the paper (Section 4) we concentrate on algorithmic complexity of effective F\o lner sequences 
and families of these sequences.  
In particular, we prove the following theorem. 

\begin{theorem} \label{2} 
The the set of all effective F\o lner sequences of a computable group belongs to the class $\Pi^0_3$, and, furthermore,
in some cases of abelian groups this family is $\Pi^0_3$-complete.  
\end{theorem}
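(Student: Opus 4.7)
The plan is to establish the two parts separately: first the $\Pi^0_3$ upper bound for any computable group, then $\Pi^0_3$-completeness for $G=\mathbb{Z}$.

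For the upper bound, fix the computable numbering $\nu$ of $G$ and identify each effective F\o lner sequence with an index $e$ of a partial computable $\phi_e$ whose values $\phi_e(n)$ encode (via $\nu$ together with a standard encoding of finite sets) a finite subset $F_n\subseteq G$. Being an effective F\o lner sequence then decomposes as (a) $\phi_e$ is total with each $\phi_e(n)$ a well-formed code, which is $\Pi^0_2$; and (b) the classical F\o lner condition
\[
\forall i\ \forall k\ \exists N\ \forall n\ge N:\ k\cdot|\nu(i)F_n\triangle F_n|<|F_n|,
\]
whose matrix is decidable because $(G,\nu)$ is computable and the sets involved are finite. The $\forall\forall\exists\forall$ prefix over a decidable relation is $\Pi^0_3$, and the conjunction with (a) stays in $\Pi^0_3$.

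For completeness, I would take $G=\mathbb{Z}$ with its standard computable numbering and reduce the classical $\Pi^0_3$-complete set $B=\{e:\overline{W_e}\text{ is infinite}\}$ (the complement of $\mathrm{Cof}$). Given $e$, define the total computable function
\[
L_n^e = \bigl|\{\, m\le n : m\notin W_{e,n}\,\}\bigr|,
\]
and set $F_n^e=\{0,1,\dots,L_n^e\}$. By $s$-$m$-$n$ the assignment $e\mapsto$ (index of the computable sequence $(F_n^e)_{n}$) is total computable. Since $\mathbb{Z}=\langle 1\rangle$, $(F_n^e)_n$ is a F\o lner sequence iff $|F_n^e\triangle(F_n^e+1)|/|F_n^e|=2/(L_n^e+1)\to 0$, iff $L_n^e\to\infty$. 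Unpacking the definition: if $\overline{W_e}$ contains $M$ distinct elements $m_1<\dots<m_M$ then $L_n^e\ge M$ for all $n\ge m_M$, so $L_n^e\to\infty$; if $\overline{W_e}$ is finite of size $k$, then $L_n^e\to k$, so $L_n^e\not\to\infty$. Hence $(F_n^e)$ is F\o lner iff $e\in B$, giving $B\le_m$ (indices of effective F\o lner sequences of $\mathbb{Z}$).

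The main obstacle is conceptual: one must fix the precise meaning of \emph{effective F\o lner sequence} so that the $\Pi^0_3$ bound is tight. The definition used must be simply ``a computable sequence that is F\o lner in the classical sense''; demanding an effective rate of convergence (for instance, that $F_n$ be $\tfrac{1}{n}$-F\o lner) would drop the complexity to $\Pi^0_2$. Once this is settled the upper bound is a quantifier count, and the hardness exploits that in $\mathbb{Z}$ the F\o lner property of intervals is governed by a single, computably approximable numerical invariant. The ``some cases'' qualification in the theorem reflects that not every computable abelian group admits such a clean single-parameter control, though the same idea extends to $\mathbb{Z}^d$ with cubes and to suitable direct sums.
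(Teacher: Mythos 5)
Your $\Pi^0_3$ upper bound is essentially the paper's argument (totality of the index is $\Pi^0_2$, the F\o lner condition is a $\forall\forall\exists\forall$ prefix over a decidable matrix), and your choice of hard set $B=\{e:\overline{W_e}\ \text{is infinite}\}=\mathbb{N}\setminus\mathrm{Cof}$ is legitimately $\Pi^0_3$-complete. The gap is in the reduction itself: the claim that $\overline{W_e}$ finite of size $k$ implies $L_n^e\to k$ is false. The quantity $L_n^e=|\{m\le n: m\notin W_{e,n}\}|$ counts not only the true elements of $\overline{W_e}$ below $n$ but also every $m\le n$ that does belong to $W_e$ yet has not been enumerated by stage $n$, and a slow enumeration makes this second contribution unbounded. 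Concretely, take $W_e=\mathbb{N}$ enumerated so that $m$ enters at stage $2^m$; then $\overline{W_e}=\emptyset$ but $L_n^e=n-\lfloor\log_2 n\rfloor\to\infty$, so $(F_n^e)_n$ is a F\o lner sequence of $\mathbb{Z}$ although $e\notin B$. Your argument therefore establishes only the implication $e\in B\Rightarrow(F_n^e)_n$ F\o lner, and the map $e\mapsto(F_n^e)_n$ is not a many-one reduction.

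The idea is salvageable, but the monitor must be a quantity whose failure to tend to infinity is witnessed by a positively observable event recurring infinitely often, not a raw census of the current complement. For instance, let $m_k(s)$ be the $k$-th smallest element of $\mathbb{N}\setminus W_{e,s}$ (nondecreasing in $s$, stabilizing exactly when $\overline{W_e}$ has at least $k$ elements) and set $f(s)=\min\{k\le s: m_k(s)\ne m_k(s-1)\}$, with $f(s)=s$ if nothing changes; then $f(s)\to\infty$ iff $\overline{W_e}$ is infinite, and $F_s=\{0,\dots,f(s)\}$ completes your reduction. That repair would in fact sharpen the paper's statement, which proves completeness only for $G=\bigoplus_{n\in\omega}\mathbb{Z}$, not for $\mathbb{Z}$. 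The paper avoids the difficulty differently: it reduces the $\Pi^0_3$-complete set $\{e:\forall n\ W_{\varphi_e(n)}\ \text{is finite}\}$ and, exploiting the infinitely many independent generators $g_n$, shrinks the $n$-th coordinate of the $s$-th candidate set each time a new element enters $W_{\varphi_e(n)}$, so that an infinite $W_{\varphi_e(n)}$ destroys the F\o lner property infinitely often in the direction of $g_n$, while a finite one perturbs only finitely many terms.
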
 
\noindent 
We also compare convergence moduli of sequences of means corresponding to these F\o lner sequences. 
In particular we show that in the case of the standard F\o lner sequence of $(\mathbb{Z},+)$ and 
the corresponding sequence of means $m_i ({\bf x})$ (which converge to an invariant mean witnessing amenability of $\mathbb{Z}$) 
the following statement holds. 
%they cannot be estimated by primitive recursive functions. 

\begin{theorem} \label{2_5}
For any total computable $f: \mathbb{N} \to \mathbb{N}$ there is a computable ${\bf x}_0 \in 2^{\mathbb{Z}}$ such that the seqience $m_i ({\bf x}_0)$, $i\in \mathbb{N}$, converges to $0$, but for every $k\in \mathbb{N}$ there is $j > f(k)$ such that $| m_j ({\bf x}_0) | \ge \frac{1}{k}$.  
\end{theorem}

In the final part of our paper (Section 5)  we study possible generalizations of our results to computable metric groups.  
We suggest a framework to computable amenability in this general case. 
In particular, we define and discuss counterparts of basic notions studied in the main body of the paper. 
In these terms we prove the following theorem.  

\begin{theorem} \label{3} 
A computably enumerable numbered metric group $(G,d,\nu )$ is computably amenable if and only if it is amenable and computable. 
\end{theorem}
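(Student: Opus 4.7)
The plan is to follow the pattern of the last clause of Theorem \ref{1}, lifting the argument from the discrete numbered setting to the metric one, with the extra care needed to work with $2^{-k}$-approximations instead of exact equalities. The statement splits naturally into two implications: computably amenable $\Rightarrow$ amenable and computable, and amenable and computable $\Rightarrow$ computably amenable.

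For the direction ``computably amenable $\Rightarrow$ amenable and computable'', amenability of $(G,d)$ is immediate, since the sets produced by the algorithm giving $\tfrac{1}{n}$-F\o lner sets form a F\o lner sequence in $G$. Computability of $(G,d,\nu)$ is the more substantial part: starting from a computably enumerable numbering, one must approximate $d(\nu(i),\nu(j))$ to arbitrary precision. The idea is that by running the F\o lner algorithm with parameter $n$ we obtain a finite test set through which the actions of $\nu(i)$ and $\nu(j)$ on F\o lner sets can be compared, and this comparison, combined with the one-sided computable enumerability of $d$ that comes with the numbering, is enough to obtain two-sided rational approximations of $d(\nu(i),\nu(j))$.

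For the converse, assume $(G,d,\nu)$ is amenable and computable. To produce $\tfrac{1}{n}$-F\o lner sets algorithmically, enumerate in parallel triples $(F,T,k)$ where $F,T\subseteq\mathbb{N}$ are finite and $k$ is a precision parameter, testing whether $\nu(F)$ satisfies a metric $\tfrac{1}{n}$-F\o lner condition with respect to $\nu(T)$ up to tolerance $2^{-k}$. Computability of $d$ makes each such test decidable. Since $(G,d)$ is amenable and the image of $\nu$ is dense, a witnessing triple exists whenever $T\supseteq\{0,\dots,n\}$ and $k$ is large, so the search terminates.

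The principal obstacle is to formulate a metric counterpart of the F\o lner condition that is simultaneously strong enough that a sequence of such approximate F\o lner sets witnesses amenability of $(G,d)$, and weak enough to be algorithmically verifiable from rational approximations of the metric. The natural choice replaces exact membership $gs\in F$ by proximity $\inf_{h\in F}d(gs,h)<\varepsilon$, which introduces a secondary parameter $\varepsilon$ that must be coupled carefully to the F\o lner tolerance $\tfrac{1}{n}$ and to the size of the test set $T$. A further subtlety, already present in Section 3 for non-finitely-generated discrete groups, is the absence of a canonical generating set; one replaces it by the growing initial segment $\nu(\{0,\dots,n\})$ and argues convergence by a diagonal argument. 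Once these definitions are fixed, both directions reduce to the arguments of Section 3 combined with standard metric approximation.
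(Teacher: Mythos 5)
Your plan correctly splits the statement and the easy parts (amenability from the algorithm; the search-and-test argument for the converse, which matches the paper's Proposition \ref{T=vsCA}), but the hard direction --- computably amenable $\Rightarrow$ \emph{computable} --- has a genuine gap. You claim that comparing the actions of $\nu(i)$ and $\nu(j)$ on a F\o lner set, ``combined with the one-sided computable enumerability of $d$,'' yields two-sided rational approximations of $d(\nu(i),\nu(j))$. This cannot work as stated: the computably enumerable presentation only lets you enumerate \emph{upper} bounds on distances (the set $\mathsf{Wrd}^<_{\nu}$), and no amount of comparison through a test set manufactures a lower bound out of upper bounds alone. The paper closes this gap by building the missing data into Definition \ref{cma} itself: the algorithm witnessing computable amenability must output, along with $F$, an assignment $(i,j)\to q$ with $d(\nu(i),\nu(j))\in[q,q+\tfrac{1}{\ell})$ for $i,j\in F$, i.e.\ two-sided approximations of distances \emph{inside} $F$. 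The proof then takes $D=\{n_1,n_2\}$ with matching parameter $\theta\ge\tfrac{2}{3}$, uses the pigeonhole principle on the two matchings to find a common $f\in F$ with $d(\nu(n_1)\nu(f),\nu(f'))<\tfrac{1}{m}$ and $d(\nu(n_2)\nu(f),\nu(f''))<\tfrac{1}{m}$, reads off $q$ with $d(\nu(f'),\nu(f''))\in[q,q+\tfrac{1}{\ell})$ from the assignment, and finally uses \emph{right-invariance} of the metric to cancel $\nu(f)$ and conclude $d(\nu(n_1),\nu(n_2))\in[q-\tfrac{1}{m},q+\tfrac{1}{\ell}+\tfrac{1}{m})$. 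None of these three ingredients (the distance assignment in the definition, the pigeonhole on matchings, right-invariance) appears in your sketch, and without them the step does not go through.

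A secondary issue: you treat the formulation of the metric F\o lner condition as an open design choice and propose the proximity condition $\inf_{h\in F}d(gs,h)<\varepsilon$. The paper instead uses the Schneider--Thom matching-number condition $\mu(F,gF,B_{<1/m})\ge\frac{n-1}{n}|F|$, where the matching is an \emph{injective} partial map $F\to gF$. The injectivity is not cosmetic: it is what makes the counting $|\Sigma_1|,|\Sigma_2|\ge\frac{2}{3}|F|$ and the ensuing pigeonhole argument valid, and it is what guarantees (via Hall's theorem) that the condition is verifiable from finitely many rational distance approximations in the converse direction. Your non-injective proximity condition would weaken both steps.
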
 
\noindent 
This is an extension of the final statement of Theorem \ref{1} to the case of metric groups.  

The authors are grateful to M. Cavaleri and T. Ceccherini-Silberstein for reading 
of some preliminary versions of the paper and helpful remarks.

\section{Preliminaries} 
In this section we give preliminaries on (a) computable groups and (b) amenable groups. 
While (b) is rather standard, the topic (a) is presented in the form which seems new. 
We have found that the notion of enumerated groups from \cite{GKEL} allows us to simplify the presentation of the material and some arguments. 

Concerning other details, let us mention that we often identify finite sets $F\subset \mathbb{N}$ with their  G\" odel numbers.
For any sets $X$ and $Y$ we will write $X \subset_{fin} Y$ to denote that $X$ is a finite subset of $Y$.  
%For any $i\in\mathbb{N}$, we denote the set $\{1,2,\ldots, i\}$ by $[i]$. 
Throughout this paper a group (metric group) $G$ is a countable/separable  group 
without any presumption about its generating set. 

A function is {\em subrecursive} if it admits a computable total upper bound.
A sequence $(n_i)_{i\in\mathbb{N}}$ of natural numbers is called {\em computable/effective}, 
if the function $k\rightarrow n_k$ is recursive.
We use standard material from the computability theory (see  \cite{sri}) and often say computable instead of recursive.

\subsection{Computabie presentations}

Let $G$ be a countable group generated by some $X\subseteq G$. 
The group $G$ is called {\em recursively presented} (see Section IV.3 in \cite{ls}) if 
$X$ can be identified with $\mathbb{N}$ (or with some $\{ 0,\ldots , n\}$) 
so that $G$ has a recursively enumerable set of relators in $X$.  
Below we give an equivalent definition, see Definition \ref{df1}. 
It is justified by a possibility identification of the whole $G$ with $\mathbb{N}$. 
We develope the approaches of \cite{EG}, \cite{EG1} and \cite{khmi}.

\begin{df}\label{df0}
Let $G$ be a group and $\nu: \mathbb{N} \rightarrow G$ be a surjective function. 
We call the pair $(G,\nu)$ a {\em numbered group}.
The function $\nu$ is called a {\em numbering} of $G$.
If $g\in G$ and $\nu(n)=g$, then $n$ is called a number of $g$.
\end{df} 

In this paper we usually assume that numberings of the group are homomorphisms 
from some group defined on $\mathbb{N}$. 
This condition is formulated in the following definition. 
The notion of enumerated group used in it, is taken from \cite{GKEL}. 

\begin{df} \label{df1} 
\begin{itemize} 
\item A group of the form $(\mathbb{N}, \star , ^{-1}, 1 )$ (where the number $1$ is the neutral element of the group) is called an {\em  enumerated group}. 
\item Given an enumerated group $(\mathbb{N}, \star , ^{-1}, 1 )$ we call a surjective homomorphism $\nu : \mathbb{N} \to G$ a {\em  computably enumerable presentation} of $G$ if the set 
$$
\mathsf{Wrd}^=_{\nu} := \{(w(n_1  ,\ldots ,n_s), w'(\ell_1 , \ldots ,\ell_t ))\, | \, w(\bar{x}) \mbox{ and }  w'(\bar{y}) \mbox{ are group words and the equality } 
$$ 
$$ 
w(\nu (n_1 ),\ldots ,\nu (n_s)) = w'(\nu (\ell_1 ) , \ldots , \nu (\ell_t )) \mbox{ holds in } G \,  \, , \, n_1  ,\ldots ,n_s , \ell_1 , \ldots ,\ell_t \in \mathbb{N}\}
$$ 
is computably enumerable.
\end{itemize} 
\end{df} 
An easy folklore argument shows that every finitely generated group with decidable word problem 
can be presented as an enumerated group $(\mathbb{N} , \star , ^{-1}, 1)$ such that $\star$ and the corresponding $^{-1}$ are computable functions.  
This also holds in the case of the free group $\mathbb{F}_{\omega}$ 
with the free basis $\omega = \{ 0 , \ldots , i , \ldots \}$. 
From now on let us fix such a presentation of $\mathbb{F}_{\omega}$: 
$$ 
(\mathbb{N}, * , ^{-1}, 1). 
\footnote{note that for multiplication here we use $*$, which is different from $\star$}
$$
We emphasize that this group is {\em computable}, i.e. its operations are computable functions!   
For every recursively presented group $G= \langle X\rangle$ 
and a natural homomorphism $\rho : \mathbb{F}_{\omega} \rightarrow G$ 
(taking $\omega$ onto $X$) we obtain that $\rho$ is a numbering 
of a computably enumerable presentation of $G$. 
This argument can be generalized as follows.

\begin{lm} \label{comp_gr} 
Assume that $(G, \nu )$ is a numbered group and the set $\mathsf{Wrd}^=_{\nu}$
is computably enumerable. 

Then $G$ has a computably enumerable presentation 
$\nu^0 : (\mathbb{N} , *, ^{-1}, 1) \to G$.  
\end{lm}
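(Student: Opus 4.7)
The plan is to construct $\nu^0$ as the unique homomorphism from the fixed computable copy of $\mathbb{F}_\omega = (\mathbb{N}, *, ^{-1}, 1)$ to $G$ that sends the free generators to the values of $\nu$, and then to reduce the computable enumerability of $\mathsf{Wrd}^=_{\nu^0}$ to that of $\mathsf{Wrd}^=_{\nu}$ by a computable translation of words.

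Concretely, let $(g_i)_{i\in\mathbb{N}}$ be a computable enumeration of the free basis of $\mathbb{F}_\omega$ inside $\mathbb{N}$, as given by our fixed presentation, and define $\nu^0(g_i) := \nu(i)$. By the universal property of the free group this assignment extends uniquely to a homomorphism $\nu^0 : \mathbb{F}_\omega \to G$, which is surjective because $\nu$ is and every element $\nu(i)$ lies in the image of $\nu^0$.

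To show that $\mathsf{Wrd}^=_{\nu^0}$ is computably enumerable, the key ingredient is the emphasized computability of $(\mathbb{N}, *, ^{-1}, 1)$. From this one extracts a total computable function that associates to each $n\in\mathbb{N}$ the reduced free-group word $g_{j_1}^{\epsilon_1} * \cdots * g_{j_{k}}^{\epsilon_{k}}$ representing it, where $j_1,\ldots,j_k,\epsilon_1,\ldots,\epsilon_k$ all depend computably on $n$. By the homomorphism property, $\nu^0(n) = \nu(j_1)^{\epsilon_1} \cdots \nu(j_k)^{\epsilon_k}$ in $G$. Consequently, any group-word equation with $\nu^0$-parameters, given as a pair $(w(\bar n), w'(\bar\ell))$, can be computably rewritten as a group-word equation with $\nu$-parameters, say $(v(\bar{j}), v'(\bar{j\,}'))$, that holds in $G$ if and only if the original does. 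This yields a total computable many-one reduction of $\mathsf{Wrd}^=_{\nu^0}$ to the c.e.\ set $\mathsf{Wrd}^=_{\nu}$, so $\mathsf{Wrd}^=_{\nu^0}$ itself is c.e.

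The only point requiring care is checking that the normal-form extraction $n \mapsto$ (word in generators) is genuinely computable. This follows from the folklore argument alluded to just before the lemma, since in our fixed presentation of $\mathbb{F}_\omega$ both $*$ and $^{-1}$ are computable and the free basis is a decidable subset of $\mathbb{N}$. Once this routine fact is in place, the rest of the argument is pure substitution and the lemma follows.
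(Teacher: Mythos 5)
Your proposal is correct and follows essentially the same route as the paper: both extend $\nu$ to the free group on the basis $\omega$ (realized as the fixed computable enumerated group $(\mathbb{N},*,{}^{-1},1)$) and establish computable enumerability of $\mathsf{Wrd}^=_{\nu^0}$ by computably rewriting word equations with $\nu^0$-parameters into word equations with $\nu$-parameters, i.e.\ a many-one reduction to the c.e.\ set $\mathsf{Wrd}^=_{\nu}$. You merely make explicit the normal-form extraction step that the paper leaves implicit in the phrase ``under the coding,'' which is a welcome clarification but not a different argument.
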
 

{\em Proof.} 
Note that extending  the map $\nu$ to the set of all group words over the base 
$\omega = \{ 0, 1 , \ldots , i , \ldots \}$ 
we obtain a homomorphism $\rho: \mathsf{F}_{\omega} \to G$ such that the set 
$$
\mathsf{Wrd}^=_{\rho} := \{(w(u_1  ,\ldots ,u_s), w'(v_1 , \ldots ,v_t ))\, | \, w(\bar{x}) \mbox{ and }  w'(\bar{y}) \mbox{ are group words, } 
$$ 
$$ 
\bar{u}, \bar{v} \in \mathbb{F}_{\omega} \mbox{ and the equation } w(\rho (u_1 ),\ldots ,\rho (u_s)) = w'(\rho (v_1 ) , \ldots , \rho (v_t )) \mbox{ holds in } G \, \}
$$ 
is computably enumerable. 

Under the coding, which we mentioned above, of the group $\mathbb{F}_{\omega}$ into the enumerated group $(\mathbb{N} , * , ^{-1}, 1)$ the homomorphism $\rho$  becomes the numbering $\nu^0 : (\mathbb{N} , *, ^{-1}, 1) \to G$ as in the formulation. 
$\Box$ 

\bigskip 

In particular, every group with a computably enumerable presentation has a presentation as in the lemma, i.e. where the enumerated group is computable. 
It is convenient to use the following notions too. 

\bigskip

\begin{df} 
\begin{itemize} 
\item If $G$ has a computably enumerable presentation then we say that $G$ is {\em computably enumerable}. 
\item If a homomorphism $\nu : \mathbb{N} \to G$ is a computably enumerable presentation of $G$ and  the set 
$\mathsf{Wrd}^=_{\nu}$
is computable (i.e. decidable), then we say that $\nu$ is a {\em computable presentation} and the group $G$ is {\em computable}.
\end{itemize} 
\end{df} 

Note that under the conditions of Lemma \ref{comp_gr} the sets $\{n:\nu^0 (n)=1\}$ and $\{(n_1,n_2): \nu^0 (n_1)=\nu^0 (n_2)\}$ are computably enumerable.
In the following remark we consider the case when they are computable.

\begin{rem}\label{rp}
Let $\nu : (\mathbb{N} , \star, ^{-1}, 1) \to G$ be a computably enumerable presentation such that $\star$ and the corresponding $^{-1}$ are computable functions. 
\begin{enumerate}[(i)] 
\item  
%$(G, \nu )$ is a numbered group and 
The set %$\mathsf{MultT}$ 
$$
\mathsf{MultT} := \{(i,j,k): \quad \nu (i)\nu (j)=\nu (k)\}
$$  
is computable if and only if the set $\{(n_1,n_2): \nu (n_1)=\nu (n_2)\}$ is computable; 
\item If $\mathsf{MultT} $ is computable, the presentation $\nu$ is computable too and, furthermore, it can be made a computable bijective presentation. 
\end{enumerate} 
Indeed, in this case the set of the smallest numbers of the elements of $G$ is computable. 
Enumerating this set by natural numbers we obtain 
a required bijective numbering. 
\end{rem} 
Computable groups correspond to groups with solvable word problem.
In this case the numbering $\nu$ is often called a {\em constructivization}, see \cite{EG}, \cite{EG1}. 
 
From now on we consider computably enumerable groups in the following way. 
\begin{itemize} 
\item Any computably enumerable group $(G, \nu )$ is taken with a homomorphis  
$\nu : (\mathbb{N}, \star ,^{-1} , 1) \to G$ where $(\mathbb{N}, \star , ^{-1},1)$ is a group with computable operations, i.e. satisfying the conditions of Lemma \ref{comp_gr}. 
\item If $(G, \nu )$ is computable, we additionally assume that $\nu$ is an isomorphism. 
\end{itemize}

\subsection{Amenability}
The preliminaries of amenability correspond to \cite{csc} and \cite{pat}. 
Let $G$ be a group, and $\ell^{\infty} (G)$ be the Banach space of bounded functions $G \to \mathbb{R}$ 
with respect to the the norm 
\[ 
\parallel {\bf x} \parallel_{\infty} = \mathsf{sup} \{ |{\bf x}(a)| \, | \, a \in G \}. 
\] 
The group $G$ is called {\em amenable} if there is a left-invariant (equivalently rigth-invariant, resp. bi-invariant) mean 
$\ell^{\infty} (G) \to \mathbb{R}$ on $G$.  
\begin{df} 
Given $n\in \mathbb{N}$ and $D\subset_{fin} G$, a subset $F\subset_{fin} G$ is called 
an $\frac{1}{n}$-{\em F\o lner set} with respect to $D$ if 
\begin{equation}
\forall x\in D\hspace{0.5cm}  \frac{|F\setminus xF|}{|F|} \leq \frac{1}{n}. 
\end{equation} 
\end{df}    

\noindent 
We denote by $\mathfrak{F}$\o$l_{G, D}(n)$ the set of all $\frac{1}{n}$-F\o lner sets with respect to $D$.
Moreover, we call the binary function:
\begin{equation}
F\o l_{G}(n,D) = \mathsf{min}\{|F| \, | \, F\subseteq G \text{ such that } F \in \mathfrak{F} \o l_{G, D}(n)\},
\end{equation}
\newline
where the variable $D$ corresponds to finite sets, the {\em F\o lner function of} $G$. 
%\cite{ver}.    

A sequence $(F_j)_{j\in \mathbb{N}}$ 
of non-empty finite subsets of $G$ is a (left) {\em F\o lner sequence}, 
if for every $g \in G$ the following condition holds:
\begin{equation}
 \lim\limits_{j\rightarrow \infty}\frac{|F_j\setminus gF_j|}{|F_j|}=0.
\end{equation} 
It is easy to see that existence of F\o lner sets for all $n$ and $D$  
is equivalent to existence of a F\o lner sequence: 

\bigskip 
$\bullet$ $G$ admits a F\o lner sequence if and only if 
$F\o l_{G}(n,D)< \infty$ for all  $n\in\mathbb{N}$ and $D\subset_{fin} G$. 

\bigskip 
\noindent 
In fact, this is the {\em F\o lner condition of amenability}.

The following example will be helpful below. 
The group $(\mathbb{Z}, +)$ has the following  F\o lner sequence: 
$$
\mathcal{F} = (\{ - i , -i +1 , \ldots , 0 , \ldots , i-1 , i \} \, | \, i \in  \mathbb{N} ). 
$$    
Note that $\{ - i , -i +1 , \ldots , 0 , \ldots , i-1 , i \}$ is $\frac{1}{2i}$-F\o lner with respect to the generator of $\mathbb{Z}$.   

Suppose that $G$ satisfies the F\o lner conditions. 
Let $( F_j  \, | \,  j\in J )$ be a  left F\o lner sequence of $G$. 
Consider, for each $j\in J$, the mean with finite
support $m_j : \ell^{\infty} (G) \to \mathbb{R}$ defined by
$$
m_j ( {\bf x} ) = \frac{1}{| F_j |} \sum \{   {\bf x}  (h) \,  | \, h \in F_j \} 
 \mbox{ for all } {\bf x} \in \ell^{\infty} (G) . 
$$  
The net $( m_j \, | \, j\in J)$ contains a sequence converging (in the weak$^*$ topology) to an invariant mean $m$ 
(see Lemma 4.5.9 and Theorem 4.9.2 in \cite{csc}). 

The space of absolutely summable functions $\ell^1 (G)$ is considered with respect to the norm 
\[ 
\parallel {\bf x} \parallel_1 = \sum \{ | {\bf x}(a) | \, | \, a\in G \} \mbox{ , where } {\bf x} : G\to \mathbb{R} \mbox{ is absolutely summable.} 
\] 

\begin{df}
A non-zero function $h: G \rightarrow \mathbb{R}^{+}$, $||h||_{1}<\infty$, is $\frac{1}{n}$-{\em invariant} with respect to $D$, if 
\begin{equation}\label{rse}
\forall x\in D\hspace{0.5cm} \frac{||h - _{x}h||_{1}}{||h||_{1}}< \frac{1}{n},
\end{equation}
where $_{x}h(g):=h(x^{-1}g)$.
\newline
We denote by $\mathfrak{R}eit_{G, D}(n)$ the set of all summable non-zero functions from $G$ to $\mathbb{R}^{+}$, 
which are $\frac{1}{n}$-invariant with respect to $D$. 
\end{df}

\noindent The following facts are well known and/or easy to prove (see also Remark 2.2 from \cite{MC3}). 

\begin{lm}\label{fr} Let $F,D\subset_{fin} G$.

\begin{enumerate}[(i)]
\item $F \in \mathfrak{F} \o l_{G, D}(n) \implies \forall g \in G \quad Fg \in F \o l_{G, D}(n)$

\item $F \in \mathfrak{F} \o l_{G, D}(n) \iff \forall x \in D  \quad \frac{|F\cap xF|}{|F|}> 1 - \frac{1}{n}$

\item $F \in \mathfrak{F} \o l_{G, D}(2n) \iff \chi_F \in \mathfrak{R}eit_{G, D}(n)$

\item If $h\in \mathfrak{R}eit_{G,D}(n)$ has finite support then there exists $F \subset \mathsf{supp}(h)$ such that for all $x \in D$ the following holds: 
\[ 
\frac{|F\setminus xF|}{|F|}< \frac{|D|}{2n}. 
\]

\end{enumerate}
\end{lm}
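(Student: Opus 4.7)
The plan is to dispatch (i)--(iii) by short direct computations and then handle (iv) via the layer cake / Namioka trick, which is the only substantive step.

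For (i) I would simply note that right translation by $g$ is a bijection of $G$ that commutes with left multiplication by $x$, so $|Fg\setminus x(Fg)| = |(F\setminus xF)g| = |F\setminus xF|$ and $|Fg| = |F|$, giving the same F\o lner ratio. For (ii) I would use the disjoint decomposition $F = (F\cap xF) \sqcup (F\setminus xF)$, so $|F\cap xF|/|F| = 1 - |F\setminus xF|/|F|$, from which both directions follow (modulo the harmless interchange of $<$ and $\leq$ between the two formulations). For (iii), I would observe that $_{x}\chi_F = \chi_{xF}$, that $\|\chi_F\|_1 = |F|$, and that $\|\chi_F - \chi_{xF}\|_1 = |F \triangle xF| = 2|F\setminus xF|$ since $|xF|=|F|$ forces $|F\setminus xF| = |xF\setminus F|$; dividing yields $\|\chi_F - {_x\chi_F}\|_1/\|\chi_F\|_1 = 2|F\setminus xF|/|F|$, so the Reiter condition at level $1/n$ is exactly the F\o lner condition at level $1/(2n)$.

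The main work is (iv). I would use the layer cake representation: for $h\colon G\to\mathbb{R}^+$ with finite support, set $F_t = \{g\in G : h(g) > t\}$. Then
\begin{equation*}
h(g) = \int_0^{\infty} \chi_{F_t}(g)\,dt, \qquad \|h\|_1 = \int_0^{\infty} |F_t|\,dt,
\end{equation*}
and since $h\ge 0$, $|h(g) - h(x^{-1}g)| = \int_0^{\infty} |\chi_{F_t}(g) - \chi_{F_t}(x^{-1}g)|\,dt$, which gives
\begin{equation*}
\|h - {_x}h\|_1 = \int_0^{\infty} |F_t \triangle xF_t|\,dt = 2\int_0^{\infty} |F_t\setminus xF_t|\,dt.
\end{equation*}
Summing the Reiter inequality over $x\in D$ and using these identities, I would obtain
\begin{equation*}
\int_0^{\infty} \Bigl(\, 2\sum_{x\in D} |F_t\setminus xF_t| - \tfrac{|D|}{n}|F_t|\,\Bigr)\,dt \;<\; 0,
\end{equation*}
so the integrand is negative on a set of positive measure. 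Since $h$ has finite support, the family $\{F_t\}_{t>0}$ takes only finitely many distinct values, so I can pick an actual level set $F := F_t$ with $\sum_{x\in D} |F\setminus xF| < \frac{|D|}{2n}|F|$. Because each term of the sum is nonnegative, this yields $|F\setminus xF|/|F| < |D|/(2n)$ uniformly for every $x\in D$, as required.

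The only real obstacle is the pigeonhole step at the end of (iv): one must convert the integral inequality (which a priori only gives a $t$ in some measurable set) into a choice of an honest level set of $h$. The finite-support assumption makes this painless, since the integrand is piecewise constant with finitely many breakpoints, but it is the one place where care is needed to respect the discrete formulation of the conclusion.
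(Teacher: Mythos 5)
Your proposal is correct: the paper gives no proof of this lemma (it is stated as ``well known and/or easy to prove,'' with the preliminaries referred to \cite{csc}), and your computations for (i)--(iii) together with the layer-cake (Namioka) argument for (iv) are exactly the standard route found there. The strict-versus-nonstrict inequality mismatch you flag in (ii) and (iii) is present in the paper's own statement, not an artifact of your argument, and your final pigeonhole step in (iv) is sound --- indeed one only needs a single value of $t$ where the integrand is negative, which is automatically a nonempty level set contained in $\mathsf{supp}(h)$.
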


Let $Prob (G)$  be the set of countably supported probability measures on $G$ (with respect to the $\sigma$-algebra of all subsets of 
$G$). 
Under the assumption (in Sections 3 and 4) that $G$  is countable, $Prob(G)$ is exactly the set of probability measures on $G$. 
Any element of this space  can be written $\mu = \sum_{i\in \mathbb{N}} \lambda_i \chi_{g_i}$  where $g_i \in G$, $\lambda_i \ge 0$ and 
$\sum_{i\in \mathbb{N}} \lambda_i = 1$. In particular, it can identified with a subset of the unit ball of $\ell^1 (G)$.

Reiter's condition states that the group is amenable if and only if for any finite subset $D\subset G$  and $\varepsilon > 0$, 
there is  $\mu \in Prob(G)$  such that 
$\parallel \mu - _{g}\mu \parallel_1 \le \varepsilon$  for any $g\in D$. 
This justifies the  term $\mathfrak{R}eit_{G, D}(n)$.

\section{Effective amenability of computably enumerable groups}

In this section $(G,\nu)$ is a numbered metric group such that $\nu$ is a homomorphism from $(\mathbb{N},\star ,^{-1},1)$ to $G$ 
where operations $\star$ and $^{-1} $ are computable. 
In the case of F\o lner's condition of amenability, we consider two types of effectiveness.
\begin{df}\label{cea} 
The numbered group $(G,\nu )$ is $\Sigma$-{\em amenable}, if there is an algorithm which 
for all pairs $(n,D)$ where $n\in \mathbb{N}$ and 
$D\subset_{fin}\mathbb{N}$, finds a set $F\subset_{fin}\mathbb{N}$ having a subset $F' \subseteq F$ with 
$\nu (F')\in \mathfrak{F} \o l_{G,\nu(D)}(n)$. 
\end{df} 

\begin{df}\label{ca} 
The numbered  group $(G,\nu)$ is {\em computably amenable} if
there exists an algorithm which for all pairs $(n,D)$, 
where $n\in \mathbb{N}$ and $D\subset_{fin}\mathbb{N}$, finds a finite set $F\subset\mathbb{N}$ such that 
$\nu(F) \in \mathfrak{F} \o l_{ G,\nu(D)}(n)$ and $|F|=|\nu(F)|$.
\end{df}

These are the main notions of our paper. 
It is clear that computable amenability implies $\Sigma$-amenability. 
Furthermore, $\Sigma$-amenability implies that the F\o lner function is subrecursive. 

Some variants of these notions were characterized by M. Cavaleri in \cite{MC3} in the case of finitely generated groups. 
The goal of this section is an adaptation of these characterizations in our general case.   
Although we use the same arguments, the adaption needs some additional effort.

\subsection{Computable Reiter's functions} 

The main result of this part, Theorem \ref{re}, is a natural generalization of a theorem of 
M. Cavaleri from \cite{MC3} (Theorem 3.1) to the case of groups which are not finitely generated. 
Throughout this section we assume that $(G,\nu)$ is a computably enumerable group under a homomrphism $\nu$ as in the beginning of the section.  

The following notation will be used below. 
It corresponds to Section 3 of \cite{MC3}. 
If $f: \mathbb{N} \to \mathbb{R}^+$ is summable and $g\in G$, then  let 
\[ 
\nu_{G,1}(f)(g)=\sum\{ f(i) \, | \, i\in\nu^{-1}(g) \}. 
\] 

\begin{df}\label{cr} 
We say that $(G,\nu)$ has {\em computable Reiter functions}, if there exists an algorithm which, for every $n\in \mathbb{N}$ and any finite set $D\subset\mathbb{N}$ finds $f: \mathbb{N} \rightarrow \mathbb{Q^{+}}$, such that $|\mathsf{supp}(f)| < \infty$ and 
$$
\forall x \in D, \quad \frac{||\nu_{G ,1}(f) - _{\nu (x)}\nu_{G ,1}(f)||_{1}}{||\nu_{G ,1}(f)||_{1}}< \frac{1}{n}, 
$$ 
\end{df}

We now need  some preliminary material concerning Reiter functions and partitions. 
Let $X$ be a nonempty set. 
An equivalence relation $E'$ on $X$ is called finer than an equivalence relation $E$ if $E' \subseteq E$.

Let $f:\mathbb{N}\rightarrow\mathbb{Q}_{+}$ be a function with a finite support $F$ and let $X$ be a finite set including $F$. 
For an equivalence relation $E$ of $X$ define $E_{F,1} (f)$ as follows:  
\[ 
E_{F,1}(f)(x):=\sum\{ f(i) \, | \, i\in F \mbox{ and } (i,x) \in E \}. 
\] 
Having $x\in \mathbb{N}$ and an equivalence relation $E$ on the set $F \cup x^{-1}\star F$ fix a set $F_0 \subset F$ of representatives of the classes of $E$ in $F$.  
Then we define the positive rational number:
$$
M_{E,F}^{x}(f):= \frac{\sum \{ | (E_{F,1}(f)(v)-E_{F,1}(f) (x^{-1}\star v))|  \, | \, v\in F_0 \} }{\sum_{v\in F}f(v)} .  
$$ 
We denote by  $P$ the canonical equivalence relation of the set $F\cup x^{-1} \star F$, 
i.e. the partition into sets $ \{\nu^{-1}(\nu(k))  \, | \, k\in F\cup x^{-1}\star F\}$. 
Then for every $x\in \mathbb{N}$ we have 
\begin{equation}\label{5}
M_{P,F}^{x}(f) = \frac{||\nu_{G .1}(f)-_{\nu(x)}\nu_{G ,1}(f)||_{1}}{||\nu_{G ,1}(f)||_{1}}.
\end{equation}
For any two equivalence relations $E$ and $E'$ of the set $F\cup x^{-1}\star F$ with $E\subseteq E'$, the triangle inequality implies 
$M_{E,F}^{x}(f) \geq M_{E',F}^{x}(f)$. 
In particular, for any  $x\in \mathbb{N}$ and $E\subseteq  P$ on $F \cup x^{-1}\star F$ we have: 
\begin{equation}\label{6}
M_{E,F}^{x}(f) \geq M_{P,F}^{x}(f).
\end{equation}

\begin{lm}\label{rr} 
Let $(G,\nu)$ be a computably enumerable group as above. 
There exists a computable enumeration of the set of all triples $(n, D, f)$, where 
$D \subset_{fin} \mathbb{N}$ and $f:\mathbb{N}\rightarrow\mathbb{Q}^{+}$ is a finitely supported function, 
such that $\nu_{G ,1}(f) \in \mathfrak{R}eit_{G,\nu(D)}(n)$. 
\end{lm}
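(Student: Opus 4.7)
By equation (5), the condition $\nu_{G,1}(f) \in \mathfrak{R}eit_{G,\nu(D)}(n)$ is equivalent to $M^x_{P,F}(f) < 1/n$ for every $x \in D$, where $F = \mathsf{supp}(f)$ and $P$ denotes the canonical partition of $F \cup x^{-1}*F$ into the fibres of $\nu$. Because $(G,\nu)$ is computably enumerable under a presentation in which $*$ and ${}^{-1}$ on $\mathbb{N}$ are computable, the set $\{(i,j)\in\mathbb{N}^2 : \nu(i)=\nu(j)\}$ is c.e.; fix a computable enumeration $((i_k,j_k))_{k\in\mathbb{N}}$ of its elements. The main obstacle is that the partition $P$ itself is in general only semi-decidable rather than decidable, so the lemma must be deduced from this one-sided approximation.

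The plan is to dovetail over all triples $(n,D,f)$, each coded by the finite data of an integer $n$, a finite $D\subset\mathbb{N}$, and a rational-valued $f$ with finite support. For each candidate $(n,D,f)$ and each $x\in D$, build an increasing chain of equivalence relations on the (computable) finite set $F \cup x^{-1}*F$ by starting from the discrete relation $E^{(x)}_0$ and, at step $k$, merging the classes of $i_k$ and $j_k$ whenever both of these integers lie in $F \cup x^{-1}*F$. This chain stabilises to $P \cap (F \cup x^{-1}*F)^2$ after finitely many steps (though we have no bound on when). At stage $s$ of the dovetail, one computes $E^{(x)}_s$ and, using $F_0\subseteq F$ chosen as the least representative of each $E^{(x)}_s$-class in $F$, the rational number $M^x_{E^{(x)}_s,F}(f)$, which is decidable from finite data. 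Output $(n,D,f)$ as soon as some stage $s$ is reached at which $M^x_{E^{(x)}_s, F}(f) < 1/n$ holds simultaneously for every $x \in D$.

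Soundness of the enumeration is immediate from inequality (6): since $E^{(x)}_s \subseteq P$, we always have $M^x_{E^{(x)}_s,F}(f) \geq M^x_{P,F}(f)$, so the output condition forces $M^x_{P,F}(f) < 1/n$ for each $x\in D$, and hence the emitted triple lies in the target set. Completeness is the converse: if $M^x_{P,F}(f) < 1/n$ for all $x\in D$, then for every sufficiently large $s$ we have $E^{(x)}_s = P \cap (F\cup x^{-1}*F)^2$, whence $M^x_{E^{(x)}_s,F}(f) = M^x_{P,F}(f) < 1/n$ and the triple is eventually enumerated. The key point, and the resolution of the main obstacle, is that the monotonicity $E\subseteq E' \Rightarrow M^x_{E,F}(f) \geq M^x_{E',F}(f)$ converts a merely c.e.\ approximation of $P$ from below into a \emph{decidable upper bound} on the true Reiter ratio, one that is both sound (never certifying a triple prematurely) and eventually tight.
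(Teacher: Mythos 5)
Your proposal is correct and follows essentially the same strategy as the paper's proof: dovetail over all triples, approximate the canonical partition $P$ from below by merging classes as pairs from the c.e.\ set $\{(i,j):\nu(i)=\nu(j)\}$ appear, certify a triple using the monotonicity inequality (6) for soundness and stabilisation of the partition for completeness. The only cosmetic difference is that you maintain a separate partition of $F\cup x^{-1}*F$ for each $x\in D$ while the paper merges everything on $\bigcup_{g\in D}g^{-1}*F$; this changes nothing in the argument.
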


\begin{proof}
We apply the method of Theorem 3.1($(i)\rightarrow (iv)$) of \cite{MC3}. 
Let us fix an enumeration of functions $f$ with finite support as in the formulation, and 
the corresponding enumeration of all triples of the form $(n,D,f)$: $(n_1 ,D_1 ,f_1)$, $(n_2 ,D_2 ,f_2), \ldots$ , $(n_k,D_k ,f_k), \ldots$.
Let us also fix an enumeration of the set $\{(n_1,n_2): \nu(n_1)=\nu(n_2)\}$.
The following procedure, denoted below by $\varkappa(n,D,f)$, verifies if the triple satisfies the condition of the lemma.

The algorithm $\varkappa(n,D,f)$ starts as follows. 
For an input $f$ let $F=\mathsf{supp} (f)$. 
Put $P_0$ to be  the (finest) partition of $\bigcup_{g\in D} g^{-1} \star F$ into singletons. 
At the $m$-th step of the enumeration of $\{(n_1,n_2): \nu(n_1)=\nu(n_2)\}$ 
we are trying to merge classes of the equivalence relation $P_{m-1}$ on $\bigcup_{g\in D} g^{-1}\star F$ already constructed at step $m-1$. 
We do so when we meet $(n_1,n_2)\not\in P_{m-1}$ with $\nu (n_1 ) = \nu (n_2 )$. 
In this case we just merge the classes of $n_1$ and $n_2$. 
In this way we obtain $P_m \subseteq P$. 
Then we verify if $M_{P_{m},F}^{x}(f) \leq \frac{1}{n}$ for all $x\in D$. 
We stop $\varkappa (n,D,f)$ when these inequalities hold. 
In this case  by (\ref{5}) and (\ref{6}), the function $\nu_{G ,1}(f)$ is $\frac{1}{n}$-invariant with respect to $D$.
If there exist $x$, such that $M_{P_{m},F}^{x}(f) > \frac{1}{n}$ and $P_m = P$, 
 then the function $\nu_{G ,1}(f)$ is not $\frac{1}{n}$-invariant.  
Note that there is no algorithm for recognizing the latter possibility. 

The algorithm stated in the formulation of the lemma at $k$-th step makes the first move 
in  $\varkappa (n_k,D_k,f_k )$, the second move in $(n_{k-1},D_{k-1},f_{k-1}), \ldots $, and the $k$-th move in  $\varkappa (n_1,D_1,f_1 )$. 
When one of these procedures gives 
$\nu_{G ,1}(f) \in \mathfrak{R}eit_{G,\nu(D)}(n)$ we put the corresponding triple into our list. 
\end{proof}

The following theorem is a part of Therem \ref{1} from the introduction. 
The proof uses the procedure $\varkappa (n,D,f)$ from the proof of Lemma \ref{rr}. 

\begin{thm}\label{re}
Let $(G,\nu)$ be a computably enumerable group. 
Then the following conditions are equivalent:

\begin{enumerate}[(i)]
\item $G$ is amenable; 

\item $(G,\nu)$ has a subrecursive F\o lner function;

\item $(G,\nu)$ is $\Sigma$-amenable;

\item $(G,\nu)$ has  computable Reiter functions.

\end{enumerate}
\end{thm}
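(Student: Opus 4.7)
The plan is to close the cycle $(i) \Rightarrow (iv) \Rightarrow (iii) \Rightarrow (ii) \Rightarrow (i)$, with Lemma \ref{rr} and Lemma \ref{fr} carrying the real weight. The genuine computability-theoretic difficulty—that equality in $G$ is only computably enumerable—has already been absorbed into the procedure $\varkappa$ of Lemma \ref{rr}; the theorem itself then reduces to assembling the right parameters.

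For $(i) \Rightarrow (iv)$, we use Reiter's condition: if $G$ is amenable, then for every $n$ and every finite $D \subset \mathbb{N}$ there exists a finitely supported $f : \mathbb{N} \to \mathbb{Q}^+$ with $\nu_{G,1}(f) \in \mathfrak{R}eit_{G, \nu(D)}(n)$, since any real-valued Reiter function may be rationally approximated without losing the invariance bound. Consequently the enumeration produced by Lemma \ref{rr} must eventually output a triple whose first two coordinates match the prescribed $(n, D)$. The algorithm for computable Reiter functions is then straightforward: run the enumeration of Lemma \ref{rr} and return the $f$ from the first matching triple.

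For $(iv) \Rightarrow (iii)$, given $(n, D)$ we query the Reiter algorithm with the refined parameter $n' := \lceil n |D|/2 \rceil$ to obtain a finitely supported $f$ with $\nu_{G,1}(f) \in \mathfrak{R}eit_{G, \nu(D)}(n')$. By Lemma \ref{fr}(iv) applied to $h := \nu_{G,1}(f)$, there is $F^* \subseteq \mathsf{supp}(h) \subseteq \nu(\mathsf{supp}(f))$ with
\[
\frac{|F^* \setminus xF^*|}{|F^*|} < \frac{|\nu(D)|}{2n'} \leq \frac{1}{n} \quad \text{for every } x \in \nu(D),
\]
so $F^* \in \mathfrak{F} \o l_{G, \nu(D)}(n)$. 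Picking any $\nu$-preimage of each element of $F^*$ inside $\mathsf{supp}(f)$ yields $F' \subseteq \mathsf{supp}(f)$ with $\nu(F') = F^*$. Hence the output $F := \mathsf{supp}(f)$ satisfies the witness condition of Definition \ref{cea}, which demands only the existence of such an $F'$, not its effective recovery.

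The remaining steps are routine. For $(iii) \Rightarrow (ii)$, the cardinality $|F|$ of the $\Sigma$-amenability output is a computable function of $(n, D)$ bounding $F\o l_G(n, \nu(D)) \leq |\nu(F')| \leq |F|$, so the F\o lner function is subrecursive. For $(ii) \Rightarrow (i)$, subrecursiveness implies finiteness of the F\o lner function everywhere, which is precisely the F\o lner condition of amenability. The sole point requiring genuine care is the calibration of $n'$ in $(iv) \Rightarrow (iii)$; the real obstacle—producing a verifiable Reiter witness without a decision procedure for equality—has already been overcome by the monotonicity inequality $M_{E,F}^{x}(f) \geq M_{P,F}^{x}(f)$ exploited in Lemma \ref{rr}.
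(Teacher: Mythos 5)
Your proposal is correct and follows essentially the same route as the paper: the cycle $(i)\Rightarrow(iv)\Rightarrow(iii)\Rightarrow(ii)\Rightarrow(i)$, with the enumeration machinery of Lemma \ref{rr} driving $(i)\Rightarrow(iv)$ and Lemma \ref{fr}(iv) driving $(iv)\Rightarrow(iii)$ (the paper seeds the search with characteristic functions of F\o lner sets via Lemma \ref{fr}(iii) rather than appealing to rational approximation of Reiter functions, but this is the same mechanism). Your explicit calibration $n'=\lceil n|D|/2\rceil$ to absorb the $|D|/2n$ factor from Lemma \ref{fr}(iv) is a detail the paper leaves implicit, and it is a welcome one.
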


\begin{proof}
It is clear that 
(iii)$\implies$(ii)$\implies$(i).  

(iv)$\implies$(iii). 
By Definition \ref{cr} there is an algorithm which for every $n\in \mathbb{N}$ and every $D \subset_{fin} \mathbb{N}$ finds a function 
$f: \mathbb{N} \rightarrow \mathbb{Q^{+}}, |\mathsf{supp}(f)| < \infty$, such that 
$\nu_{G ,1}(f)\in \mathfrak{R}eit_{G, \nu (D)}(n)$. 
Denote $F = \mathsf{supp}(f)$. 
By Lemma \ref{fr} $(iv)$, 
there exists  $F'\subseteq F$ such that $\nu(F')$ satisfies F\o lner's condition with respect to $\nu (D)$. 

To prove (i)$\implies$(iv) let us assume that the group $G$ is amenable. 
Therefore, for any $n$ and $D\subset_{fin} \mathbb{N}$ there exists $F\subset_{fin} \mathbb{N}$ such that 
$\nu(F) \in \mathfrak{F}\o l_{G, \nu(D)}(2n)$ and $|F| = |\nu(F)|$. 
Since $\nu$ is injective on $F$, we see by Lemma \ref{fr} (iii) that $\nu_{G ,1}(\chi_F)=\chi_{\nu(F)}\in \mathfrak{R}eit_{G, \nu(D)}(n)$. 
Now fix an enumeration of finite subsets of $\mathbb{N}: F_1,F_2,\ldots$ and start the algorithms 
$\varkappa(n,D,\chi_{F_1}), \varkappa(n,D,\chi_{F_2}),\ldots$ constructed in Lemma \ref{rr}, 
until one of them stops giving us a Reiter function for $\nu(D)$. 
\end{proof}

\subsection{Effective amenability of computable groups}

The main results of this section, correspond to Theorem 4.1 and Corollary 4.2 of M. Cavaleri  from \cite{MC3}. 
In the proof we will use functions $*$ and $^{-1}$ from Lemma \ref{comp_gr}. 
In particular, we are under the conditions of  Remark \ref{rp}. 

\begin{thm}\label{ce}
Let $(G,\nu)$ be a computably enumerable group. The following conditions are equivalent:

\begin{enumerate}[(i)]
\item $(G,\nu)$ is amenable and computable; 

\item $(G,\nu)$ is computably amenable (Definition \ref{ca}).

\end{enumerate}
\end{thm}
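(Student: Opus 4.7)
The equivalence splits into two independent tasks. For (i)$\Rightarrow$(ii) I plan to piggyback on Theorem \ref{re}, using the decidability of equality in $G$ to upgrade $\Sigma$-amenability to computable amenability. For (ii)$\Rightarrow$(i) amenability is immediate from Definition \ref{ca}, so the real content is to extract a decision procedure for $\mathsf{Wrd}^=_{\nu}$ from the algorithm of computable amenability.

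For (i)$\Rightarrow$(ii), given an input $(n,D)$ I would first invoke the $\Sigma$-amenability algorithm supplied by Theorem \ref{re} to obtain a finite $F\subset \mathbb{N}$ having some subset $F'\subseteq F$ with $\nu(F')\in\mathfrak{F}\o l_{G,\nu(D)}(n)$. Since $\nu(i)=\nu(j)$ is decidable, I can enumerate the subsets $F'\subseteq F$ and, for each, compute $|\nu(F')|$ together with $|\nu(F')\cap \nu(x)\nu(F')|$ for $x\in\nu(D)$ by grouping the elements of $F'$ into $\nu$-equivalence classes; this turns the F\o lner inequality into a decidable check. Having located a suitable $F'$, selecting one representative from each $\nu$-class produces $F''\subseteq F'$ with $\nu(F'')=\nu(F')$ and $|F''|=|\nu(F'')|$, which is exactly the output demanded by Definition \ref{ca}.

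For (ii)$\Rightarrow$(i), by Remark \ref{rp} it suffices to decide $\nu(m)=1_G$ for an arbitrary $m\in\mathbb{N}$. The positive side is already semi-decidable through the c.e. enumeration of $\mathsf{Wrd}^=_{\nu}$, so I only need a parallel semi-decision for the negative side. The plan is to apply the computable-amenability algorithm to $(n,D)=(2,\{m\})$, obtaining $F$ with $|F|=|\nu(F)|$ and $\nu(F)\in\mathfrak{F}\o l_{G,\{\nu(m)\}}(2)$, and then enumerate pairs $(f,f')\in F\times F$ with $f\neq f'$, semi-deciding $\nu(m\ast f)=\nu(f')$ through the enumeration of $\mathsf{Wrd}^=_{\nu}$. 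Any such witness forces $\nu(m)\neq 1_G$ because injectivity of $\nu$ on $F$ gives $\nu(f)\neq \nu(f')=\nu(m)\nu(f)$.

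The main obstacle is verifying that this negative search actually terminates whenever $\nu(m)\neq 1_G$. Here the interplay between the F\o lner inequality and injectivity is essential: Lemma \ref{fr}(ii) yields $|\nu(F)\cap \nu(m)\nu(F)|\geq |\nu(F)|/2$, and injectivity of $\nu$ on $F$ then produces a partial map $\phi\colon F\to F$, defined on at least half of $F$, satisfying $\nu(\phi(f))=\nu(m\ast f)$. If $\phi$ were the identity on its whole domain, then $\nu(m)\nu(f)=\nu(f)$ for some $f$, forcing $\nu(m)=1_G$. Consequently, when $\nu(m)\neq 1_G$ there must exist $f$ in the domain with $\phi(f)\neq f$, and the pair $(f,\phi(f))$ is exactly the witness the enumeration will eventually discover.
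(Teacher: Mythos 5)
Your proposal is correct, and both directions work, but the second direction takes a genuinely different route from the paper's. For (i)$\Rightarrow$(ii) the paper simply enumerates all finite $F\subset_{fin}\mathbb{N}$ and uses decidability of $\mathsf{Wrd}^=_{\nu}$ to verify the conditions of Definition \ref{ca} directly, with termination guaranteed by amenability; your detour through the $\Sigma$-amenability algorithm of Theorem \ref{re} followed by a decidable search over subsets $F'\subseteq F$ and extraction of $\nu$-class representatives is sound but adds an unnecessary layer. For (ii)$\Rightarrow$(i) the paper works with the two-element set $E=\{n_1,n_2\}$, requests a $\frac{1}{3}$-F\o lner set, and uses the pigeonhole principle to find a common $f$ with $(f,f')\in\Sigma_1$ and $(f,f'')\in\Sigma_2$, deciding $\nu(n_1)=\nu(n_2)$ by comparing $f'$ and $f''$. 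You instead reduce to the word problem $\nu(m)=1_G$ via $m=n_1*n_2^{-1}$ (legitimate here, since $*$ and $^{-1}$ are computable and $\nu$ is a homomorphism, and Remark \ref{rp} reduces computability to deciding equality of numbers), and then need only a $\frac{1}{2}$-F\o lner set for the singleton $\{m\}$: the nonempty injective partial matching $\phi$ either has a fixed point, forcing $\nu(m)=1_G$, or is fixed-point-free on its whole domain, in which case any pair $(f,\phi(f))$ with $f\neq\phi(f)$ is a semi-decidable witness for $\nu(m)\neq 1_G$, to be run in parallel with the positive semi-decision through $\mathsf{Wrd}^=_{\nu}$. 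Your version buys a slightly weaker hypothesis for this step (F\o lner sets for singletons at precision $\frac{1}{2}$ rather than for pairs at precision $\frac{1}{3}$, as in the paper's Corollary following Theorem \ref{ce}), at the cost of leaning on the computability of the ambient group operations on $\mathbb{N}$; the paper's pigeonhole argument avoids the reduction to the identity and works symmetrically in $n_1,n_2$.
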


\begin{proof}
(i)$\implies$(ii).
Suppose that $(G,\nu)$ is amenable and computable. 
Let $D \cup \{ n \} \subset_{fin} \mathbb{N}$. 
Applying the enumeration of all finite sets 
we are looking for $F\subset_{fin} \mathbb{N}$ 
which satisfies the conditions of Definition \ref{ca} for $\nu (D)$. 
Since by Remark \ref{rp} there is an algorithm verifying all equalities of the form $\nu(d_k) \nu(f_i)=\nu(f_j)$, 
where $f_i,f_j\in F$ and $d_k\in D$, 
we can algorithmically check if $\nu(F)\in \mathfrak{F} \o l_{G, \nu(D)}(n)$.
Furthermore, verifying all equalities of the form $\nu(f_k)=\nu(f_l)$, where $f_k, f_l \in F$, we can check if $|F|=|\nu(F)|$. 
Since $(G,\nu)$ is amenable we eventually find the required $F$.

(ii)$\implies$(i). 
Our proof is a modification and a simplification of the construction of Theorem 4.1 from \cite{MC3}.
It is clear, that the existence of an algorithm for (ii) implies amenability of $(G,\nu)$. 
Therefore, we only need to show that $(G,\nu)$ is computable.
According to Remark \ref{rp} it suffices to show that there is an algorithm which for any $n_1,n_2 \in \mathbb{N}$ 
verifies if $\nu(n_1) = \nu(n_2)$.

Fix $n_1,n_2$. 
Let $E$ be the set $\{n_{1},n_{2}\}$.
We use the algorithm for (ii) to find a set $F$ corresponding to $5$ and $E$, 
i.e. $\nu(F) \in F \o l_{G, \nu(E)}(5)$ and $|F|=|\nu(F)|$. 
Let $F = \{f_1, f_2,\ldots, f_k\}$. 

For each $i\in \{ 1,2 \}$ we define $\Sigma_i \subseteq \{ (f,f') \, | \, \nu (n_i) \nu (f) = \nu (f')\, , \, f,f'\in F \}$ by the following procedure. 
Having $f,f'\in F$ apply the algorithm of enumeration of the set $\mathsf{Wrd}^{=}_{\nu}$ for verification if 
$\nu (n_i \star f) = \nu(f' )$, $i=1,2$. 
When we get a confirmation of this equality, we extend the corresponding $\Sigma_i$ by $(f,f')$. 
We apply it simultaneously to each pair $(f,f')$.  
Since 
$$
\forall n\in E \, \,  ( |\nu(F) \cap \nu (n) \nu(F) | \ge \frac{3}{5} |\nu (F) |)  
$$ 
and $|F|=|\nu(F)|$, there is a step of these computations when $\Sigma_1 \cup \Sigma_2$ 
witnesses the inequality above. 
Having this we stop the procedure. 

By the pigion hole principle, there are pairs $(f,f') \in \Sigma_1$ and $(f,f'')\in \Sigma_2$. 
If these pairs are the same, we have $\nu (n_1 )\nu (f) = \nu (n_2 )\nu (f)$, i.e. 
 $\nu (n_1 ) = \nu (n_2 )$.  
If $f' \not=f"$ we have $\nu (f') \not= \nu (f")$, i.e. $\nu (n_1 ) \not= \nu (n_2 )$. 
\end{proof}

The proof of Theorem \ref{ce} gives the following interesting observation. 

\begin{cor} 
Let $(G,\nu)$ be a computably enumerable, amenable group. 
If for some $n\geq 5$ there exists an algorithm, which for every $D\subset_{fin}\mathbb{N}$ with $|D|=2$, finds a set $F\subset_{fin}\mathbb{N}$ such that $\nu(F) \in F \o l_{G, \nu(D)}(n)$ and $|F|=|\nu(F)|$, then $G$ is computable.
\end{cor}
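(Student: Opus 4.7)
The plan is to observe that the proof of the implication (ii)$\implies$(i) of Theorem \ref{ce} already operates under exactly the hypotheses of the corollary: it only ever invokes the computable-amenability algorithm at the single value $n=3$ and at two-element sets $E=\{n_1,n_2\}\subseteq\mathbb{N}$. The sole quantitative input is the pigeonhole inequality $2(1-\tfrac{1}{3})>1$, which survives verbatim with $3$ replaced by any $n\geq 3$, since $2(1-\tfrac{1}{n})>1$ is equivalent to $n>2$. So the same construction should give the conclusion here.

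More concretely, by Remark \ref{rp} it suffices to exhibit an algorithm deciding, for arbitrary $n_1,n_2\in\mathbb{N}$, whether $\nu(n_1)=\nu(n_2)$. If $n_1=n_2$ the answer is obvious; otherwise $D:=\{n_1,n_2\}$ has $|D|=2$, so the hypothesis yields $F\subset_{fin}\mathbb{N}$ with $\nu(F)\in\mathfrak{F}\o l_{G,\nu(D)}(n)$ and $|F|=|\nu(F)|$. I would then enumerate the set $\mathsf{Wrd}^=_\nu$ in parallel over all pairs $(f,f')\in F\times F$ in order to grow
$$
\Sigma_i := \{(f,f')\in F\times F \, | \, \nu(n_i * f)=\nu(f')\},\qquad i=1,2,
$$
stopping as soon as the confirmed pairs certify $|\Sigma_i|>(1-\tfrac{1}{n})|F|$ for both $i=1,2$; Lemma \ref{fr}(ii) together with $|F|=|\nu(F)|$ guarantees that this threshold is eventually met.

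At that point, letting $A_i:=\{f\in F \, | \, \exists f'\ (f,f')\in\Sigma_i\}$, injectivity of $\nu$ on $F$ gives $|A_i|=|\Sigma_i|>(1-\tfrac{1}{n})|F|$, and the assumption $n\geq 3$ forces $|A_1\cap A_2|>(1-\tfrac{2}{n})|F|>0$. So one can pick $f\in A_1\cap A_2$ together with $f',f''\in F$ satisfying $(f,f')\in\Sigma_1$, $(f,f'')\in\Sigma_2$; as in the proof of Theorem \ref{ce}, $\nu(n_1)=\nu(n_2)$ if and only if $\nu(f')=\nu(f'')$, which by $|F|=|\nu(F)|$ reduces to the decidable equality $f'=f''$ in $\mathbb{N}$. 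The main point to verify, and essentially the only non-bookkeeping step, is the recognition that the choice $n=3$ in Theorem \ref{ce} was not incidental but the sharp threshold of the pigeonhole step, and that no extra flexibility in $n$ or in $|D|$ is exploited elsewhere in that proof.
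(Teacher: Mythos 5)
Your proposal is correct and is exactly the paper's argument: the paper derives this corollary by observing that the proof of Theorem \ref{ce}, (ii)$\implies$(i), only calls the algorithm at $|D|=2$ and at the Følner parameter $3$, and the pigeonhole step $2\bigl(1-\tfrac{1}{n}\bigr)>1$ goes through for every $n\geq 3$. Your identification of this threshold as the only place where the value of $n$ matters is precisely the point the authors intend.
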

\noindent Using Theorem \ref{ce} we deduce a version of Theorem \ref{re} 
for computable groups. 
This finishes the proof of Theorem \ref{1}. 

\begin{thm}\label{eq}
Let $(G,\nu)$ be a computable group. Then the following conditions are equivalent:

\begin{enumerate}[(i)]
\item $(G,\nu)$ is amenable; 

\item $(G,\nu)$ is computably amenable;

\item there exists an algorithm which, for all pairs $(n,D)$, 
where $n\in \mathbb{N}$ and $D\subset_{fin}\mathbb{N}$, 
finds a finite set $F\subset\mathbb{N}$ such that 
$\nu(F) \in \mathfrak{F}$\o$l_{G,\nu(D)}(n)$ (a weaker version of (ii));

\item $(G,\nu)$ has computable Reiter functions;

\item $(G,\nu)$ has subrecursive F\o lner function. 

\end{enumerate}
\end{thm}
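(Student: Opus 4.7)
The plan is to derive Theorem \ref{eq} as a direct synthesis of the two preceding results, Theorems \ref{re} and \ref{ce}, using the fact that a computable group is in particular computably enumerable. The five equivalences split naturally into two clusters: the Reiter/F\o lner-function cluster $\{(\mathrm{i}),(\mathrm{iv}),(\mathrm{v})\}$, which is essentially the content of Theorem \ref{re}, and the effective-F\o lner cluster $\{(\mathrm{ii}),(\mathrm{iii})\}$, which I would glue onto the first cluster via Theorem \ref{ce}.

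First I would invoke Theorem \ref{re} for $(G,\nu)$ (viewed just as a computably enumerable group) to obtain (i) $\Leftrightarrow$ (iv) $\Leftrightarrow$ (v) for free. Since $(G,\nu)$ is by hypothesis \emph{computable}, Theorem \ref{ce} specializes to ``amenable iff computably amenable'', giving (i) $\Leftrightarrow$ (ii) at once.

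It remains to close the loop through (iii). The implication (ii) $\Rightarrow$ (iii) is immediate, because (iii) is obtained from Definition \ref{ca} simply by dropping the injectivity clause $|F|=|\nu(F)|$. For (iii) $\Rightarrow$ (i), observe that any algorithm producing, for every $n\in\mathbb{N}$ and every $D\subset_{fin}\mathbb{N}$, a finite set $F$ with $\nu(F)\in\mathfrak{F}\o l_{G,\nu(D)}(n)$ witnesses the F\o lner condition for $G$: any finite $\widetilde D\subset G$ lifts to some $D\subset_{fin}\mathbb{N}$ by surjectivity of $\nu$, and the algorithm then exhibits an $\tfrac{1}{n}$-F\o lner set with respect to $\widetilde D$. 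Hence $G$ is amenable.

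There is essentially no obstacle here beyond bookkeeping: the real content has already been carried out in Theorems \ref{re} and \ref{ce}. The only mildly delicate point is the implication (iii) $\Rightarrow$ (ii), which is not proved directly but rather routed through (i): once (iii) yields amenability, computability of $(G,\nu)$ together with Theorem \ref{ce} upgrades this to (ii) and in particular restores the injectivity condition $|F|=|\nu(F)|$ automatically. This detour is exactly where the hypothesis ``$(G,\nu)$ is computable'' (as opposed to merely computably enumerable) is used.
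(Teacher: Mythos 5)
Your proposal is correct: every implication you claim does hold, and together they connect all five conditions. The route differs from the paper's in one place worth noting. For the cluster $\{(i),(iv),(v)\}$ you simply cite Theorem \ref{re} (legitimate, since a computable group is in particular computably enumerable), so your derivation of (i)$\Leftrightarrow$(iv) inherits the rather heavy search procedure $\varkappa$ from Lemma \ref{rr}. The paper instead keeps Theorem \ref{ce} for (i)$\Rightarrow$(ii), treats (ii)$\Rightarrow$(iii)$\Rightarrow$(i) and (ii)$\Rightarrow$(v)$\Rightarrow$(i) as immediate (as you do), quotes Lemma \ref{fr}(iv) for (iv)$\Rightarrow$(iii), and then closes the loop with a new, direct two-line argument for (ii)$\Rightarrow$(iv): run the algorithm of (ii) on $(2n,D)$ to get $F$ with $|F|=|\nu(F)|$, and output $\chi_F$; since $\nu$ is injective on $F$, $\nu_{G,1}(\chi_F)=\chi_{\nu(F)}$, which is $\frac{1}{n}$-invariant by Lemma \ref{fr}(iii). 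What the paper's version buys is an explicit and essentially trivial Reiter-function algorithm in the computable case (characteristic functions of F\o lner sets suffice), which is genuinely more informative than rerouting through the general enumeration argument of Theorem \ref{re}; what your version buys is brevity and modularity, at the cost of hiding where the computability hypothesis actually simplifies matters. Your remark that the only use of computability (beyond computable enumerability) is in the detour (iii)$\Rightarrow$(i)$\Rightarrow$(ii) via Theorem \ref{ce} is accurate and matches the paper's logic.
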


\begin{proof}
By Theorem \ref{ce} we have (i)$\Rightarrow$(ii) and by Lemma \ref{fr}(iv) we have (iv)$\Rightarrow$(iii). Both 
(ii)$\Rightarrow$(iii)$\Rightarrow$(i) and (ii)$\Rightarrow$(v)$\Rightarrow$(i) are easy to see.

It follows that we only need to show that (ii)$\Rightarrow$(iv). 
We start with a finite set $D$ and use an algorithm of (ii) to find a set $F$ corresponding to $2n$. 
Then the characteristic function $\chi_F$ can be taken as $f$ from Definition \ref{cr}. 
Indeed, since the function $\nu$ is injective on $F$ then $\nu_{G ,1}(\chi _{F})$ is the characteristic function of $\nu(F)$, 
which is $\frac{1}{n}$-invariant with respect to $\nu (D)$ by Lemma \ref{fr}(iii).
\end{proof}

\section{Effective F\o lner sequence}

Let $(G,\nu)$ be a computable group. 
By Remark \ref{rp} 
%\ref{comp_m-gr} and \ref{comp_rec} 
we may assume that the function $\nu$ is injective. 
Therefore, we identify the set $\nu(\mathbb{N})$ with $\mathbb{N}$ and subsets $F$ of $\mathbb{N}$ with $\nu(F)\subseteq G$.

A {\em computable F\o lner sequence} of the group $(G,\nu)$ is a computable sequence $(n_j)_{j\in\mathbb{N}}$ where each $n_j$ is a G\"odel number of some $F_j$, such that $(F_j)_{j\in\mathbb{N}}$ is a F\o lner sequence. 
When $(n_j)_{j\in\mathbb{N}}$ is computable we will also say that $(F_j)_{j\in\mathbb{N}}$ is a computable (or effective) F\o lner sequence.

In this section we consider two questions where F\o lner sequences are involved. 
Firstly we analyse algorithmic complexity of the set of all computable F\o lner sequences. 
The second part of the section concerns the procedure how an invariant mean is built from a F\o lner sequence. 
Having ${\bf x} \in \ell^{\infty}(G)$ the corresponding value of the mean is the limit of an effective sequence of rational numbers.  
We will study the complexity of the corresponding convergence modulus.   
Questions of this kind are quite natural in computability theory, see \cite{kofr} and the discussion in the end of the section. 

\subsection{Algorithmic complexity of computable  F\o lner sequences} 

In the previous section we have shown that amenability of $(G,\nu)$ is equivalent to computable amenability. 
Note that this is also equivalent to existence of an effective F\o lner sequence. 
Indeed, given $\ell$ we put $E= \{ 0,\ldots ,\ell \}$ and using the algorithm for computable amenability, compute the G\"odel number 
$n_{\ell}$ of some $F_{\ell} \in \mathfrak{F} \o l_{G,E}(\ell )$. 
Clearly, the sequence $(F_j)_{j\in\mathbb{N}}$ is a F\o lner's one and the sequence 
$(n_j)_{j\in\mathbb{N}}$ is a computable F\o lner sequence. 

The following theorem classifies the set of all computable F\o lner sequences of the group $(G,\nu)$ 
in the arithmetical hierarchy. 
This is Theorem 2 from Introduction. 

\begin{thm}\label{fs} 
Let $(G,\nu)$ be a computable group. 
The set of all computable F\o lner sequences of $(G,\nu)$ belongs to the class $\Pi^0_3$. 
Moreover, for $G=\bigoplus\limits_{n\in\omega} \mathbb{Z}$ it is a $\Pi^0_3$-complete set.
\end{thm}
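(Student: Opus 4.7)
The plan is to handle the upper bound and the completeness separately.

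For the $\Pi^0_3$ upper bound, an index $e$ belongs to the set iff $\varphi_e$ is total (a $\Pi^0_2$ condition) and, writing $F_j$ for the finite subset of $\mathbb{N}=\nu(\mathbb{N})$ coded by $\varphi_e(j)$, the F\o lner clause
\[
\forall g\in\mathbb{N}\ \forall k\ \exists J\ \forall j\ge J\quad \frac{|F_j\setminus \nu(g) F_j|}{|F_j|}<\frac{1}{k}
\]
holds. Since $(G,\nu)$ is computable, the matrix here is uniformly decidable (multiplication and equality are algorithmic), so this clause is $\forall\forall\exists\forall$ with decidable kernel, hence $\Pi^0_3$. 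Conjoining with the $\Pi^0_2$ totality condition is still $\Pi^0_3$.

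For the completeness claim with $G=\bigoplus_{n\in\omega}\mathbb{Z}$, I would give a many-one reduction from a generic $\Pi^0_3$ predicate $P(e):=\forall n\,\exists m\,\forall k\,R(e,n,m,k)$; write $Q_n(e):=\exists m\,\forall k\,R(e,n,m,k)$ for its $\Sigma^0_2$ restrictions. The idea is to build a uniformly computable sequence of boxes $(F^e_j)$ whose F\o lner-ness at the generator $e_n$ is equivalent to $Q_n(e)$. For each pair $(e,n)$ I run a guess-and-reset procedure: maintain a current candidate witness $m^{(j)}_n$, and at stage $j$ test the monotone $\Pi^0_1$ approximation $\forall k\le j\,R(e,n,m^{(j)}_n,k)$; if it still holds, set $m^{(j+1)}_n:=m^{(j)}_n$ and $L_{j+1,n}:=L_{j,n}+1$, otherwise discard the candidate, set $m^{(j+1)}_n:=m^{(j)}_n+1$ and $L_{j+1,n}:=0$. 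If $Q_n(e)$ holds, the procedure eventually locks onto the least genuine witness and then $L_{j,n}\to\infty$; if $Q_n(e)$ fails, every candidate is eventually discarded, the counter resets infinitely often, and $L_{j,n}\not\to\infty$.

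Now set $F^e_j:=\prod_{n\le j}[0,L_{j,n}]\subseteq G$, a uniformly computable nonempty finite subset, and use $s$-$m$-$n$ to obtain a computable $f$ with $\varphi_{f(e)}(j)$ the G\"odel number of $F^e_j$. A direct count gives
\[
\frac{|F^e_j\setminus e_n F^e_j|}{|F^e_j|}=\frac{1}{L_{j,n}+1}\quad\text{for }n\le j,
\]
while the ratio equals $1$ for the finitely many $j<n$; hence the F\o lner condition at $e_n$ is equivalent to $L_{j,n}\to\infty$, i.e.\ to $Q_n(e)$. The same computation handles $e_n^{-1}$, and since $G$ is abelian and generated by $\{e_n\}$, the subadditivity inequality $|F\setminus ghF|\le|F\setminus gF|+|F\setminus hF|$ lifts generator-wise F\o lner-ness to all of $G$. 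Therefore $f(e)$ codes a computable F\o lner sequence iff $P(e)$, completing the $\Pi^0_3$-hardness.

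The main obstacle is arranging a computable function $L_{j,n}$ whose divergence is an honest witness to the $\Sigma^0_2$ predicate $Q_n(e)$; the guess-and-reset scheme succeeds because the stagewise $\Pi^0_1$ approximation to $m\in S_{n,e}:=\{m:\forall k\,R(e,n,m,k)\}$ is monotone non-increasing, so once a candidate is rejected it stays rejected, preventing spurious witnesses. The remaining F\o lner ratio calculations for the box and the reduction from generators to all of $G$ are routine.
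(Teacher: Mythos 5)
Your proof is correct and follows essentially the same route as the paper: the identical $\Pi^0_3$ upper bound (totality, which is $\Pi^0_2$, conjoined with a $\forall\exists\forall$ clause with decidable kernel), and for hardness the same construction of product boxes in $\bigoplus_{n\in\omega}\mathbb{Z}$ whose side length in coordinate $n$ diverges if and only if an associated $\Sigma^0_2$ condition holds. The only difference is that the paper reduces from the concrete $\Pi^0_3$-complete set $\overline{Cof}=\{e:\forall n\; W_{\varphi_e(n)} \text{ is finite}\}$, where the enumeration of $\{(n,x):x\in W_{\varphi_e(n)}\}$ supplies the ``reset'' signal for coordinate $n$ directly, whereas you reduce from a generic $\Pi^0_3$ predicate and manufacture the resets with a guess-and-reset counter $L_{j,n}$ --- both mechanisms are standard and do the same job.
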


\begin{proof} 
Let $\varphi(x,y)$ be a universal recursive function, and $\varphi_x(y)=\varphi(x,y)$ be the recursive function with the number $x$. 
We identify computable F\o lner sequences with numbers of recursive functions which produce these sequences.
The set of these numbers will be denoted by $\mathfrak{F}_{seq}(G)$.

It is straightforward that $m\in \mathfrak{F}_{seq} (G)$ if and only if the following formula holds:
\begin{align}
(\phi(m,y) &\text{ is a total function})\wedge\notag
(\forall x,n \in \mathbb{N})(\exists l)(\forall k,s)\Big(k>l\wedge (\phi(m,k)=s)\\ 
&\wedge(\mbox{$s$ is a G\" odel number of $F$})\rightarrow 
 \, \,  \Big( \frac{|F\setminus x\star F|}{|F|}<\frac{1}{n} \Big) \Big) . \label{seq7} 
\end{align}

Given number $s$ the inequality $ \frac{|F\setminus x\star F|}{|F|}<\frac{1}{n} $ can be verified effectively. 
Since the set of numbers of all total functions belongs to the class $\Sigma_2^0$ (see \cite{sri}), it is easy to see that the set of all $m$ which satisfy (\ref{seq7}) is a $\Pi_3^0$ set. 
This proves the first part of the theorem.

We remind the reader that $W_t=\mathsf{Dom}\varphi_t$ is the computably enumerable set with a number $t$.
The set $\overline{Cof}=\{e:\forall n\; W_{\varphi_e(n)} \text{ is finite}\}$, is known to be a $\Pi^0_3$-complete set (\cite{sri}, p. 87). 
To prove the second part of the theorem, assume that $G=\bigoplus\limits_{n\in\omega} \mathbb{Z}$. Let us show that the set $\overline{Cof}$ is reducible to $\mathfrak{F}_{seq}(G)$. 

We present $\bigoplus\limits_{n\in\omega} \mathbb{Z}$ as $\bigoplus\limits_{n\in\omega} \langle g_n\rangle$. We shall construct a sequence $\{F_s^e \, | \, e,s \in \mathbb{N} \}$ such that $e\in\overline{Cof}$ iff $\{F_s^e\, | \, s\in \mathbb{N} \}$ is a F\o lner sequence. 
For each $e$ let us fix a computable enumeration of the set $\{(n,x): x\in W_{\varphi_e(n)}\}$.
We can assume that this enumeration is without repetitions. 

For a given $s$, we use the enumeration of the set $\{(n,x): x\in W_{\varphi_e(n)}\}$ to find the element $(n_s,x_s )$ with the number $s$. For each $i=1,\ldots, s$ such that $i\neq n_s$ let $F_{s,i}=\{ 0, g^{\pm 1}_{i},g_{i}^{\pm 2}, \ldots ,g_{i}^{\pm s}\}$. 
For $i=n_s$ we put $F_{s,i}=\{ 0, g^{\pm 1}_{i}\}$. 
Then in the former case  $F_s^e$ is an $\frac{1}{2s}$-F\o lner set with respect to $g^{\pm 1}_i$, and in the latter case $F_s^e$ is not a $\frac{1}{4}$-F\o lner set with respect to $g_i$. 
Let $F_s^e=\bigoplus\limits_{1}^{s} F_{s,i}$. 
This ends the construction.
Note that it depends on $e$, and is realized by a Turing machime; the latter can be explicitly found. 

\noindent \textit{Case 1.} $e\notin\overline{Cof}$. 
There exists $n'$ such that $W_{\varphi_{e}(n')}$ is an infinite set. 
Therefore, there exist an increasing sequence $\{s_i\}$ and the number $i'$ such that for all $i>i'$, $F_{s_i}^e$ is not 
an $\frac{1}{4}$-F\o lner set with respect to $g_{n'}$. 
Clearly, the number of the algorithm producing the sequence $\{F_s^e \, | \, s\in \mathbb{N} \}$ does not belong to the set of numbers of F\o lner sequences.

\noindent \textit{Case 2.} $e\in\overline{Cof}$. For all $n$, $W_{\varphi_{e(n)}}$ is a finite set. 
Therefore, for all $n$, there exists the number $s'$ such that for all $s>s'$, $F_{s}^e$ is an $\frac{1}{2s}$-F\o lner set with respect to 
$g^{\pm 1}_{n}$.  
Thus, by an easy argument we see that it is a F\o lner sequence.

Since for every $e$ the number of the algorithm producing $\{F^e_s\}$ can be effectively found, it follows that the set $\overline{Cof}$ is reducible to $\mathfrak{F}_{seq}(G)$, which completes the proof. 
\end{proof}

\subsection{Complexity of convergence moduli} 

Suppose that $G$ satisfies the F\o lner conditions. 
Then $G$ admitsa a left F\o lner net $( F_j  \, | \,  j\in J )$. 
Consider, for each $j\in J$, the mean with a finite
support $m_j : \ell^{\infty} (G) \to \mathbb{R}$, defined by
$$
m_j ( {\bf x} ) = \frac{1}{| F_j |} \sum \{   {\bf x}  (h) \,  | \, h \in F_j \} 
 \mbox{ for all } {\bf x} \in \ell^{\infty} (G) . 
$$  
Then for every $g\in G$ and for every ${\bf x} \in \ell^{\infty}(G)$ we have 
$\lim_j ( g m_j - m_j )( {\bf x} ) = 0$ (see the proof of Theorem 4.9.2 in \cite{csc}). 
Taking a subnet if necessary we may assume that $( m_j \, | \, j\in J)$ converges (in the weak$^*$ topology) to an invariant mean $m$ 
(see Lemma 4.5.9 and Theorem 4.9.2 in \cite{csc}). 

Assume that $(G,\nu)$ is a computable group with an injective $\nu$. 
Let $(n_j)_{j\in\mathbb{N}}$ define an effective F\o lner sequence of the group $(G,\nu)$, where each 
$n_j$ is a G\"odel number of some $F_j$, such that $(F_j)_{j\in\mathbb{N}}$ is a F\o lner sequence. 
The corresponding mean $m$ can be viewed as a measure $2^G \, \to \, [ 0,1 ]$. 
Note that if ${\bf x} \in 2^G$ is computable (with respect to $\nu$), then the sequence $(m_j ({\bf x}) \, | \, j\in J )$ is computable and converges to $m ({\bf x})$.  
The question which we study in this section concerns moduli of this convergence. 

\begin{rem} 
Having a computable F\o lner sequence $\mathcal{F}$ and a computable ${\bf x} \in 2^G$ let us define: 
$$ 
Mod_{\mathcal{F}} ({\bf x}) =  \{ (k,j) \, | \, \forall j' , j^" ( j^" > j< j' \to |m_{j'} ({\bf x}) - m_{j^"} ({\bf x}) | < \frac{1}{k} \} .  
$$ 
It is easy to see that the complement of $Mod_{\mathcal{F}} ({\bf x})$ in $\mathbb{N} \times \mathbb{N}$ is computably enumerable.   
Furthermore, when  $Mod_{\mathcal{F}} ({\bf x})$ is computably enumerable, the function 
$$ 
mod_{\mathcal{F},{\bf x}} : k \to \mathsf{min} \{ j \, | \, (k,j) \in  Mod_{\mathcal{F}} ({\bf x}) \} 
$$ 
is computable. 
\end{rem} 
The main question which we study below concerns the growth of $mod_{\mathcal{F},{\bf x}}$ introduced in this remark. 
In order to simplify the situation assume that $m ({\bf x})$ is a given rational number. 
In fact, we will consider the "pointed" set  
$$ 
Mod^p_{\mathcal{F}} ({\bf x}) =  \{ (k,j) \, | \, \forall j' ( j< j' \to |m_{j'} ({\bf x}) - m ({\bf x}) | < \frac{1}{k} \} .  
$$
As above, the complement of $Mod^p_{\mathcal{F}} ({\bf x})$ in $\mathbb{N} \times \mathbb{N}$ is computably enumerable.   

We will consider the group $\mathbb{Z}$ (under some standard 1-1 enumeration) and the effective  F\o lner family
$$
\mathcal{F} = (\{ - i , -i +1 , \ldots , 0 , \ldots , i-1 , i \} \, | \, i \in \mathbb{N} ). 
$$    
(As we already noted in Section 2,  $\{ - i , -i +1 , \ldots , 0 , \ldots , i-1 , i \}$ is 
$\frac{1}{2i}$-F\o lner with respect to the generator of $\mathbb{Z}$.)  
Our main result below shows that the growth of $mod_{\mathcal{F},{\bf x}}$ is not bounded by a primitive recursive function. 
We believe that our main construction can be adapted to many other computable groups. 
The construction is presented in the proof of the following theorem (Theorem 3 from Introduction). 

\begin{thm} \label{conv-mod}
Let $f$ be a total computable function $\mathbb{N} \to \mathbb{N}$.  
Then there is a computable ${\bf x}_0 \in 2^{\mathbb{Z}}$ such that with respect to the  
computable F\o lner sequence $\mathcal{F}$ the values  $m_i ({\bf x}_0)$, $i\in \mathbb{N}$, converge to $0$, and for every $k\in \mathbb{N}$ there is $j > f(k)$ such that $| m_j ({\bf x}_0) | \ge \frac{1}{k}$.   
\end{thm}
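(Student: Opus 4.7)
The plan is to construct ${\bf x}_0 = \chi_A$ for a computable set $A \subset \mathbb{N}$ consisting of a sparse sequence of contiguous ``bumps'' of $1$'s. For each $k \geq 1$, I will anchor the $k$-th bump so that its right endpoint is exactly $j_k$, where $(j_k)$ is a fast-growing computable sequence with $j_k > f(k)$, and the bump consists of $a_k := \lceil (2j_k+1)/k \rceil$ consecutive ones ending at $j_k$. Since the window $[-j_k, j_k]$ then contains the entire $k$-th bump, this forces $m_{j_k}({\bf x}_0) \geq a_k/(2j_k+1) \geq 1/k$, verifying the second conclusion with the witness $j = j_k > f(k)$.

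Concretely, I would define $(j_k)$ recursively by $j_1 := f(1) + 2$ and $j_k := \max(f(k),\, k^2 j_{k-1}) + 1$, and put $A := \bigcup_{k\geq 1} \{j_k - a_k + 1, \ldots, j_k\}$. The rapid growth of $(j_k)$ ensures the bumps are pairwise disjoint for $k \geq 3$ (since $a_k \leq 2j_k/k + 1$ while $j_k - j_{k-1} \geq (k^2-1) j_{k-1}$), and ${\bf x}_0$ is computable: to decide $n \in A$ one simply computes $j_1, a_1, j_2, a_2, \ldots$ until $j_k \geq |n|$ and checks membership of $n$ in that bump. The same growth yields the estimate $S_k := \sum_{i \leq k} a_i = (1 + o(1)) a_k = O(j_k / k)$.

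To check $m_j({\bf x}_0) \to 0$, I would split into two regimes. Between bumps, for $j \in [j_k,\, j_{k+1} - a_{k+1}]$, the window $[-j, j]$ captures exactly the first $k$ bumps, so $m_j({\bf x}_0) = S_k/(2j+1) \leq S_k/(2j_k+1) = O(1/k)$. During the $(k+1)$-st bump, $m_j({\bf x}_0) = (S_k + j - j_{k+1} + a_{k+1})/(2j+1)$ is a linear-over-linear function of $j$; a short calculation shows its derivative has the sign of $2j+1 - 2N(j)$, which is positive exactly when $j_{k+1} > S_{k+1} - 1/2$, and this holds for large $k$ by the estimate on $S_{k+1}$. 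Hence $m_j$ is increasing on the bump and attains its maximum $S_{k+1}/(2j_{k+1}+1) = O(1/(k+1))$ at the right endpoint $j_{k+1}$. Both regimes give $m_j({\bf x}_0) \to 0$.

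The main obstacle I anticipate is the correct placement of each bump. A naive construction placing the $k$-th bump at the \emph{left} end of its allotted interval would cause $m_j$ to spike to values close to $1/2$ while the bump is being traversed, because the numerator grows at unit rate while the denominator $2j+1$ grows only at rate $2$ from a still-small value. Anchoring the bump to \emph{end} at $j_k$ means that by the time $j$ enters the bump the denominator is already about $2j_k$, which caps the density contribution at roughly $1/k$ and lets the convergence $m_j \to 0$ go through while still guaranteeing the $1/k$ lower bound at $j=j_k$.
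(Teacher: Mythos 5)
Your proof is correct and is essentially the same construction as the paper's, just parametrized differently: both build a computable $0$--$1$ sequence out of increasingly sparse blocks of ones, calibrated so that the window average momentarily reaches about $\frac{1}{k}$ at a position beyond $f(k)$ and stays $O(\frac{1}{k})$ until the next block (the paper fixes each block's length at $f(k)$ per side of a symmetric sequence and tunes its position, whereas you fix the right endpoint $j_k > f(k)$ and tune the block's length to $\lceil (2j_k+1)/k\rceil$). Like the paper's own argument, which starts its induction at $k=3$, yours only produces the witness for $k\ge 2$ (a block of $2j_1+1$ ones cannot sit inside $\mathbb{N}\cap[-j_1,j_1]$), but this small-$k$ caveat is shared with the original and is harmless for the corollary on primitive recursive functions.
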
 

\begin{proof} 
We define ${\bf x}_0 \in 2^{\mathbb{Z}}$ such that ${\bf x}_0 (0) = 1$ and ${\bf x}_0 (i) = {\bf x}(-i)$.  
Further details are given in the inductive procedure below. 

At inductive step $k$ assume that at step $k-1> 1$ we have already defined some number $i_{k-1}$ and values 
${\bf x}_0 (i)$ for all $i$ with $-i_{k-1} \le i  \le i_{k-1}$.  
We start at $k=3$, where it is assumed that $i_{2} = 5$ , ${\bf x}_0 (1) = {\bf x}_0 (2) ={\bf x}_0 (3) = 0$ and 
${\bf x}_0 (4) = {\bf x}_0 (5) =1$.  
At every step $k>2$ we also assume that 
$$
\frac{\sum\{ {\bf x}_0 (i) | -i_{k-1} \le i \le i_{k-1} \} } {2 i_{k-1}+1} < \frac{1}{ k-1} \le \frac{ \sum\{ {\bf x}_0 (i) | -i_{k-1} \le i \le i_{k-1} \} } {  2 i_{k-1}-1 }  . 
$$ 
It is easy to see from our description of step 3 that this condition is satisfied for $k - 1=2$.  
Note that it implies 
$$
\frac{1}{k} < \frac{\sum\{ {\bf x}_0 (i) | -i_{k-1} \le i \le i_{k-1} \} } {2 i_{k-1}+1}  . 
$$
Find $i'_{k-1}\ge i_{k-1}$ such that 
$$
\frac{1}{k} \le \frac{2f(k) + \sum\{ {\bf x}_0 (i) | -i_{k-1} \le i \le i_{k-1} \} } {  2f(k) +2 i'_{k-1} +1} < \frac{1}{ k-1}   . 
$$
Let $t_k$ be the minimal natural number $> i'_k$ such that 
$$
 \frac{2f(k) + \sum\{ {\bf x}_0(i) | -i_{k-1} \le i \le i_{k-1} \} } { 2 f(k) +2 t_k +1} < \frac{1}{ k}   . 
$$
Put $i_k = f(k) + t_k$ and define 
\[ 
{\bf x}_0  (j) = \left\{\begin{array}{l@{\quad}r}
0  \quad\mbox{if}\quad \, i_{k-1} < j\le  i'_{k-1} \\
1 \quad\mbox{if}\quad i'_{k-1} < j \le i'_{k-1} + f(k)\\
0  \quad\mbox{if}\quad \, \, i'_{k-1} + f(k) < j \le i_k .
\end{array}\right.
\]
Note that $i_k$ and the values ${\bf x}_0 (i)$, $i\le i_k$, are found in an effective way.  
In particular $I = \{ i_k  \, | \, k\in \mathbb{N} \}$ is a computable set and the sequence  
$$
m_j ({\bf x}_0 ) =\frac{\sum\{ {\bf x}_0 (i) | -j \le i \le j \} } {2 j+1}  \, ,  \, j \in \mathbb{N} , 
$$ 
is computable. 
Furthermore, by the choice of $i_k$ we see that $i_{k} -1 \ge f(k)$ and $m_{i_k -1} ({\bf x}_0 ) \ge \frac{1}{k}$. 

To see that $m_j ({\bf x}_0 ) \to 0$  note that  for every $k$ and every $j$ with $i_{k-1} \le j \le i_k$ the inequality 
$m_j ({\bf x}_0 ) \le \frac{1}{k-1}$ holds. 
\end{proof} 

\begin{cor} 
There is a computable ${\bf x}_0 \in 2^{\mathbb{Z}}$ such that with respect to the  
computable F\o lner sequence $\mathcal{F}$ the values  $m_i ({\bf x}_0)$, $i\in \mathbb{N}$, converge to $0$, and for every primitive recursive function $f$  there is $k\in \mathbb{N}$ and $j > f(k)$ such that $| m_j ({\bf x}_0) | \ge \frac{1}{k}$.  
\end{cor}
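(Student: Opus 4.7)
The plan is to reduce the corollary to Theorem \ref{conv-mod} by invoking a single total computable function that eventually dominates every primitive recursive function. The canonical choice is the Ackermann function $A : \mathbb{N} \to \mathbb{N}$, but any total computable function not bounded by any primitive recursive one will do. Recall the standard fact that for every primitive recursive $g$ there exists $k_g$ such that $A(k) > g(k)$ for all $k \ge k_g$.

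Concretely, I would first set $f := A$ and apply Theorem \ref{conv-mod} to produce a computable $\mathbf{x}_0 \in 2^{\mathbb{Z}}$ with $m_i(\mathbf{x}_0) \to 0$ and with the property that for every $k \in \mathbb{N}$ there exists $j > A(k)$ such that $|m_j(\mathbf{x}_0)| \ge \tfrac{1}{k}$. The convergence $m_i(\mathbf{x}_0) \to 0$ comes for free from the theorem, so nothing more need be done for the first half of the conclusion.

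Next I would verify the primitive recursive statement. Given an arbitrary primitive recursive function $g$, pick any $k \ge k_g$ (so that $A(k) > g(k)$); by the conclusion of Theorem \ref{conv-mod} applied to $A$, there is $j > A(k) > g(k)$ with $|m_j(\mathbf{x}_0)| \ge \tfrac{1}{k}$. This is exactly the required witness pair $(k,j)$ for $g$.

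The only subtlety worth flagging is making sure the dominating function we invoke is itself total computable, since Theorem \ref{conv-mod} is stated only for total computable $f$; the Ackermann function satisfies this. There is no genuine obstacle here beyond citing the dominance property; the corollary is essentially a diagonal-free consequence of the theorem together with the fact that the class of primitive recursive functions admits a total computable majorant.
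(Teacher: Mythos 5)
Your proposal is correct and follows essentially the same route as the paper: both reduce the corollary to Theorem \ref{conv-mod} by exhibiting a single total computable function that eventually dominates every primitive recursive function and then extracting the witness pair $(k,j)$ from the theorem's conclusion. The only difference is the choice of majorant --- you invoke the Ackermann function, while the paper enumerates the primitive recursive functions $g_1,g_2,\dots$ and uses $\hat{g}(n)=\sum_{i=1}^{n}\sum_{j=1}^{n}g_i(j)$ --- which is immaterial.
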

\begin{proof} 
We enumerate all primitive recursive functions $g_1,g_2,\dots $ and define
a function $\hat{g}: \mathbb{N}\rightarrow \mathbb{N}$ by the rule
\[ 
\hat{g}(n)=\overset{n}{\underset{i=1}{\sum}} \overset{n}{\underset{j=1}{\sum}}\, \, g_i(j),\hspace*{2mm} n\in \mathbb{N}.
\]  
Clearly, $\hat{g}$ is computable and for each $k$ there is $n_0$ such that $g_k (n) \leqslant \hat{g}(n)$ for all $n>n_0$. 
Applying theorem above to $\hat{g}$ we obtain the statement of the corollary. 
\end{proof}

\begin{rem} 
Let $\Gamma$ be a finitely generated group, $S \subseteq \Gamma$ a finite symmetric
 generating set. 
According to \cite{ES} a {\em F\o lner sequence in the (colored) Cayley graph} $Cay(\Gamma ,S)$ is a sequence
 $F =\{ F_1, F_2, \ldots )$ of finite spanned subgraphs such that for all natural $r \ge 0$ all but finitely
 many of the $F_n$ are $r$-approximations. 
The latter means that  there exists a subset $W$ of vertices of $F_n$ of size $> (1 - 1/r)| F_n |$ such that 
for any $w\in W$ the $r$-neighborhood of $w$ is rooted isomorphic to the $r$-neighborhood of a vertex of 
the Cayley-graph of $\Gamma$ (as edge labeled graphs).  
The group $\Gamma$ is amenable if $Cay(\Gamma ,S)$ has a F\o lner sequence. 
Note, that the family $\mathcal{F}$ considered in Theorem \ref{conv-mod} is a F\o lner sequence 
in the Cayley graph of $\mathbb{Z}$ with respect to the generators $\pm 1$.  
\end{rem}

\paragraph{Connections with computability over reals} 

The material of this section is connected with the topic of complexiity over reals initiated by Ko and Friedman in \cite{kofr}. 
Consider a countable amenable group $G$ which is computable under some 1-1 numbering. 
In fact, we will assume that $G$ is defined on $\mathbb{N}$. 
Let $\mathcal{F} = ( F_0 , F_1 , \ldots )$ be a computable F\o lner sequence in $G$.   
Consider $G$ as the following disjoint union: 
\[ 
F_0 \dot{\cup} (F_1 \setminus F_0) \dot{\cup} (F_2 \setminus (F_0 \cup F_1 )) \dot{\cup} \ldots \, = 
\{ g_1, g_2 , \ldots \}  
\] 
with $F_i \setminus (F_0 \cup \ldots \cup F_{i-1} )= \{ g_{\ell_{i-1} +1}, \ldots , g_{\ell_i } \}$. 
We identify ${\bf x} \in 2^G$ with 
\[ 
\frac{{\bf x}(g_1)}{2} + \frac{{\bf x}(g_2)}{4} + \ldots + \frac{{\bf x}(g_i )}{2^i} + \ldots \, . 
\]
Now the following function maps $[0,1]$ into $[0,1]$: 
\[ 
m_i ({\bf x} ) = \frac{1}{|F_i |} \sum \{ {\bf x}(h) \, | \, h\in F_i \} . 
\]
Since this function has finitely many values, according to Lemma 2.1 from \cite{kofr} and example (iii) after it, 
it is a partial recursive function from $[0,1]$ to $[0,1]$.  
At this stage we remind the reader that according to Definition 2.1 of \cite{kofr} 
 intuitively the computation of a partial recursive real function $f$ is as follows. 
For a given dyadic number ${\bf x}$ and and a natural number $n$, the Turing machine 
tries to find a dyadic rational number $d_n$ of length $n$ such that $d_n$ is close to $f({\bf x})$ up to $\frac{1}{2^{n}}$. 
During the computation, ${\bf x}$ is used as  an oracle. 
As a result $f({\bf x}) = \lim_n d_n$. 

Let $m({\bf x}) = \lim_i m_i ({\bf x})$. 
This function is a measure representing the invariant mean defined in the first paragraph of this section. 
Is it a partial recursive function? 
The answer is "no" by the following reason. 
By Theorem 2.2 of \cite{kofr} a partial recursive function is continuous on its domain. 
Note that 

$\bullet$ for every interval $(q,q') \subseteq [0,1]$, for every real $\varepsilon >0$ and for every set $X\subseteq G$ that 
is represented by some $r \in (q,q')$ (as an element of $2^G$), there is $X'\subseteq G$ and $r'\in (q,q')$ which represents $X'$ 
such that $|r - r' |  < \varepsilon$ but $|m(r) - m(r')| > \frac{1}{2}$. 

\noindent 
Indeed, assuming that ${\bf x} \in 2^G$ corresponds to $X$ and $r$, choose a number $n$ such that 
\[ 
q< \frac{{\bf x}(g_1)}{2} + \frac{{\bf x}(g_2)}{4} + \ldots + \frac{{\bf x}(g_n )}{2^n} < q' \, \mbox{ and } 
\Big|\frac{{\bf x}(g_{n+1})}{2^{n+1}} + \frac{{\bf x}(g_{n+2})}{2^{n+2}} + \ldots + \frac{{\bf x}(g_i )}{2^i} + \ldots \, \Big| <\varepsilon . 
\]
Then defining ${\bf x}'$ to be ${\bf x}(g_i)$ for $i\le n$ and ${\bf x}' (g_i) =0$  for $i>n$, we obtain $r'$ such that $m(r') = 0$. 
 It can happen that $m(r) \le \frac{1}{2}$. 
In this case we define ${\bf x'}$ so that ${\bf x}'(g_i )=1$ for all $i> n$. 
It represents $r'$ such that $m(r') = 1$.  
We conclude by the following statement. 

\begin{pr} 
The measure $m(r)$ is not a partial recursive function in any interval from $[0,1]$. 
\end{pr}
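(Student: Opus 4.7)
My plan is to derive the result from the discontinuity observation already recorded in the bullet point preceding the proposition, combined with the fundamental theorem of Ko--Friedman that every partial recursive real function is continuous on its domain (Theorem 2.2 of \cite{kofr}). Suppose for contradiction that there is an open interval $(q,q')\subseteq [0,1]$ such that the restriction $m\upharpoonright (q,q')$ is a partial recursive real function. Then it must be continuous at every point of its domain in $(q,q')$.

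The first step is to fix a real $r\in (q,q')$ lying in the domain of $m\upharpoonright (q,q')$ and produced by some ${\bf x}\in 2^G$, and then to invoke the bullet-point construction. Namely, for any prescribed $\varepsilon>0$ one picks $n$ large enough so that $q<\sum_{i\le n}{\bf x}(g_i)2^{-i}<q'$ and the tail is below $\varepsilon$, and then manufactures ${\bf x}'$ that keeps the first $n$ bits of ${\bf x}$ and either zeroes out the remaining bits (giving $m(r')=0$) or sets them all to $1$ (giving $m(r')=1$). Depending on whether $m(r)\le 1/2$ or $m(r)> 1/2$, at least one of these two choices yields a representative ${\bf x}'\in 2^G$ with $|r-r'|<\varepsilon$ and $|m(r)-m(r')|>1/2$.

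The second step is to turn this into the desired contradiction. Since $r'$ is represented by an explicit ${\bf x}'\in 2^G$ on which $m_i$ trivially converges to $0$ or to $1$, $r'$ lies in the domain of $m$ as well. As $\varepsilon$ was arbitrary, this shows that $m$ is discontinuous at $r$, contradicting the continuity consequence of Theorem 2.2 of \cite{kofr}. Since $(q,q')$ was an arbitrary subinterval of $[0,1]$, the proposition follows.

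The main technical nuisance to watch for is the non-uniqueness of binary expansions for dyadic rationals: on such points the value of $m$ can depend on the choice of ${\bf x}\in 2^G$ representing the real. I would sidestep this by observing that the reals $r,r'$ produced in the construction are equipped by their explicit representatives ${\bf x},{\bf x}'$, so the values $m(r),m(r')$ used in the discontinuity argument are unambiguous; any genuine partial recursive real function, by Ko--Friedman's definition, computes a value depending only on the real, so the same contradiction applies even if we restrict attention to non-dyadic $r$ (which are already dense in $(q,q')$).
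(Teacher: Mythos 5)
Your proposal is correct and follows essentially the same route as the paper: the perturbation construction from the preceding bullet point shows $m$ is discontinuous at every point of any interval, which contradicts the Ko--Friedman theorem that a partial recursive real function is continuous on its domain. Your extra remark on the two binary expansions of dyadic rationals is a sensible precaution that the paper leaves implicit, but it does not change the argument.
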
 

It is worth mentioning that Theorem 3.2 of \cite{kofr} describes polynomial time computable real valued functions 
as some special limits of simple piecewise linear functions.  
Convergence moduli naturally appear in the definition of these limits, see Definition 3.8 of that paper. 
This seems slightly analogous to non-computability of $m(r)$ and non-boundedness of convergence moduli in 
$m_i (r) \to m(r)$.

\section{Metric groups and amenability}   

Locally compact groups form the basic area of classical amenability theory. 
Thus analysis of computability aspects of amenability in the case of metric locally compact groups is a natural challenge.   
On the other hand motivated by strong progress made in the last two decates in the general case of amenable topological groups  
(see \cite{GriHarp} \cite{KPT}, \cite{pestov16}, \cite{SchneiderThom}, \cite{thom}), we study the subject without the restriction 
of local compactness.   

In this section a natural framework for computable amenability of metric groups is presented. 
We  will see that some results from the previous sections  have natural counterparts in the metric case. 
On the other hand, we will also describe some new issues compared to the discrete case.    

\subsection{Computable presentations of metric groups} 

We consider computably enumerable metric groups using the standard approach of computability theory.  
It basically corresponds to computable presentations of Polish spaces considered in  \cite{mos} and \cite{W}. 
See also more recent papers \cite{MelMon}, \cite{DGP}, \cite{FM} where some kinds of computable presentations of continuous metric structures is considered. 

We usually assume that a metric group is taken with a right-invariant metric $\le 1$. 
%It is worth noting that the majority of arguments just uses  right invariantness. 
This is a more general case compared to papers mentioned above, where  bi-invariantness of the metric is assumed.  

\begin{df}\label{df0}
Let $(G,d)$ be a metric group and $\nu: \mathbb{N} \rightarrow G$ 
be a function such that $\nu (\mathbb{N})$ is dense in $G$. 
We call the triple $(G,d, \nu)$ a {\em numbered metric group}.
The function $\nu$ is called a {\em numbering} of $(G,d)$.
If $g\in G$ and $\nu(n)=g$, then $n$ is called a number of $g$.
\end{df} 

It is worth noting that  a numbered metric group $(G,d,\nu)$ can be viewed together with 
the additional condition that $\nu (\mathbb{N})$ is a subgroup of $G$ and $\nu$ is a homomorphism from the enumerated group 
$(\mathbb{N}, *, ^{-1},1)$ introduced in Section 2.1 (i.e. a computable copy of $\mathbb{F}_{\omega}$).  
Indeed, the numbering $\nu$ from the definition, naturally extends to a homomorphism 
$$
\mathbb{F}_{\omega} \to \{ w(\nu (n_1),\ldots , \nu (n_s )) \, | \, w(x_1 , \ldots , x_s ) \mbox{ is a group word over } x_1 , \ldots , x_s , \ldots \}  \le G, 
$$ 
which can be viewed as a numbering: 
$$
\nu_1 :  \mathbb{N} \to \{ w(\nu (n_1),\ldots , \nu (n_s )) \,  | \, w(x_1 , \ldots , x_s ) \mbox{ is a group word over } x_1 , \ldots , x_s , \ldots \}   
$$ 
being a homomorphism $(\mathbb{N},*,^{-1},1)\to  G$.     
This explains why  in the definition below we take this assumptions (see also the paragraph after the definition). 

\begin{df} \label{df11} 
\begin{itemize} 
\item Given an enumerated group $(\mathbb{N}, \star , ^{-1}, 1 )$ we call a homomorphism $\nu : \mathbb{N} \to G$ a {\em computably enumerable presentation} of $G$ if $\nu (\mathbb{N})$ is dense in $G$ and the sets 
$$
\mathsf{Wrd}^=_{\nu} := \{(w(n_1  ,\ldots ,n_s), w'(\ell_1 , \ldots ,\ell_t ))\, | \, w(\bar{x}) \mbox{ and }  w'(\bar{y}) \mbox{ are group words and } 
$$ 
$$ 
w(\nu (n_1 ),\ldots ,\nu (n_s)) = w'(\nu (\ell_1 ) , \ldots , \nu (\ell_t )) \mbox{ holds in } G \, \}
$$ 
and 
$$
\mathsf{Wrd}^<_{\nu} := \{(w(\nu (n_1 ) ,\ldots ,\nu (n_s)), w'(\nu(\ell_1 ) , \ldots , \nu (\ell_t )) ,k):\, w(\bar{x}), w'(\bar{y}) \mbox{ are group words and } 
$$ 
$$ 
d(w(\nu (n_1 ),\ldots ,\nu (n_s)), w'(\nu(\ell_1 ) , \ldots , \nu (\ell_t ))) < \frac{1}{k}\}
$$ 
are  computably enumerable.
\item If $(G,d)$ has a computably enumerable  presentation then we say that $(G,d)$ is {\em computably enumerable}. 
\item If a homomorphism $\nu : (\mathbb{N},\star , ^{-1},1 )  \to G$ is a computably enumerable presentation of $G$ and  the sets
$\mathsf{Wrd}^=_{\nu}$ and $\mathsf{Wrd}^<_{\nu}$ 
are computable, then we say that $\nu$ is a {\em computable presentation} and the group $(G,d,\nu )$ is {\em computable}.
\end{itemize} 
\end{df} 

Note that if $(G,d,\nu )$ is a numbered metric group such that the sets $\mathsf{Wrd}^=_{\nu}$ and $\mathsf{Wrd}^{<}_{\nu}$ are computably enumerable, then the  group $(G,d,\nu_1 )$ built after Definition \ref{df0} is computably enumerable too.
Indeed, since any word $u(\nu_1 (m_1),\ldots , \nu_1 (m_s ))$  coincides with some $w(\nu (n_1 ), \ldots , \nu (n_s ))$ 
and the latter one can be found in a computable way, any enumeration of the sets $\mathsf{Wrd}^=_{\nu}$ and $\mathsf{Wrd}^{<}_{\nu}$ with respect to $\nu$ effectively determines an enumeration of the corresponding set with respect to $\nu_1$.

\begin{rem} \label{comp_rec}
Assume that a numbered metric group $(G,d,\nu)$ is defined for an enumerated group $(\mathbb{N}, \star , ^{-1}, 1 )$ 
with computable operations and $\nu$ is a group homomorphism.  
Then the group is computable if and only if the set 
$$
\mathsf{Wrd}^{\le}_{\nu} := \{(w(\nu (n_1 ) ,\ldots ,\nu (n_s)), w'(\nu(\ell_1 ) , \ldots , \nu (\ell_t )) ,k):\, k \in \mathbb{N} \cup \{ \infty \} \, \, , \,   w(\bar{x}), w'(\bar{y}) \mbox{ are group words and } 
$$ 
$$ 
d(w(\nu (n_1 ),\ldots ,\nu (n_s)), w'(\nu(\ell_1 ) , \ldots , \nu (\ell_t ))) \le \frac{1}{k}\}
$$ 
is computable (we identify $0 = \frac{1}{\infty}$). 
Furthermore, it is easy to see that 
\begin{quote} 
the group is computable if and only if it is computably enumerable and there is an algorithm which for any 
$i,j\in \mathbb{N}$ and $\varepsilon \in \mathbb{Q}^+$ 
%with $\varepsilon <r$ and $d(\nu (i), \nu (j)) \le r$ and then 
finds a rational number $\ell$ such that $d(\nu (i), \nu (j)) \in [\ell , \ell +\varepsilon )$.    
\end{quote} 
This shows that $d(\nu (i), \nu (j))$ is a computable real number. 
\end{rem} 

\begin{rem} 
As in Section 2.1 (see Lemma \ref{comp_gr}) we may assume that when $(G,d,\nu)$ is computably enumerable, the operations of the enumerated group  $(\mathbb{N}, \star , ^{-1}, 1 )$ are computable (in fact, we just take the free group $(\mathbb{N}, * , ^{-1}, 1 )$).  
Note that under this assumption the set 
$$ 
\mathsf{MultT} := \{(i,j,k) \, | \, \nu(i)\nu(j)=\nu(k)\}
$$ 
is computable exactly when 
the set $\mathsf{T}_=:= \{(i,j) \, | \,  \nu(i) = \nu(j)\}$ is computable. 
\end{rem} 

\begin{quote} 
$\bullet$ {\em From now on we will consider computably enumerable groups under such presentations that $\nu$ is a homomorphism from 
an enumerated group $(\mathbb{N}, \star , ^{-1}, 1 )$ with computable operations and the image of the numbering is a dense subgroup. }
\end{quote} 

\begin{rem}\label{comp_m-gr}
It is worth noting that when a numbered metric group $(G,d,\nu)$ is computably enumerable and the set 
$\mathsf{T}_=$ 
is computable,  the numbering $\nu$ in the statement above can be chosen to be injective.   
\end{rem}

\begin{rem} 
Assume that $G$ is a conutable discrete group.  
Let us consider it with respect to the $\{ 0,1\}$-metric $d$.  
The following statements are easy. 
\begin{itemize} 
\item If $(G,\nu )$ is computably enumerable in the sense of Section 2.1, then the metric group $(G,d,\nu )$ is computably enumerable. 
\item If $(G,\nu )$ is computable in the sense of Section 2.1, then $(G,d,\nu )$ is a computable presentation in the sense of Definition \ref{df11}. 
\end{itemize} 
\end{rem}

\subsection{Amenability and effective amenability} 
In order to develope computable amenability in the metric case, we apply amenability theory of topological groups
developed by F.M. Schneider and A. Thom in \cite{SchneiderThom}. 
A rough presentation of it is as follows.  

Let $G$ be a topological group, $F_1 ,F_2\subset G$ are finite and $U$ be an identity neighbourhood. 
Let $R_U$ be a binary relation defined as follows: 
$$
R_U=\{ (x,y) \in F_1\times F_2 : yx^{-1} \in U \} . 
$$ 
This relation defines a bipartite graph on $(F_1 , F_2 )$, say $\Gamma$. 
A matching in $\Gamma$ is an injective map  $\phi : D \to F_2$ such that $D\subseteq F_1$ and $(x,\phi (x)) \in R_U$ for all $x \in  D$. 
A matching $\phi$ in $\Gamma$ is said to  be perfect if $dom(\phi ) = F_1$. 
Furthermore, the {\em  matching number} of $\Gamma$ is defined to be
$$
\mu (\Gamma ) = \mathsf{sup} (| \mathsf{dom}(\phi )| | \phi \mbox{ matching in }\Gamma \}. 
$$ 
By Hall's matching theorem this value 
is computed as follows: 
$$
\mu (F_1 ,F_2 ,U) = |F_1 | - \mathsf{sup} \{ |S| - |N_R (S)| : S\subseteq F_1 \} , 
 $$ 
where $N_R (S) = \{ y\in F_2 : (\exists x\in S) (x,y)\in R_U \}$. 
%%%%%%%%%%%%%%%%%%%
%The following notion will be convenient too. 
%\begin{df}
%Given $\varepsilon$ and $g\in G$ we call an injective map $\phi : F \to g F$ an $\varepsilon$-matching for $gF$ 
%if for every $g'\in F$, $d(g',\phi (g')) < \varepsilon$ and the size of $\mathsf{dom}(\phi)$ is maximal for this condition. 
%\end{df} 

Theorem 4.5 of \cite{SchneiderThom} 
gives the following description of amenable topological groups. 
\begin{quote} 
{\em Let $G$ be a Hausdorff topological group. The following are equivalent. \\ 
(1) $G$ is amenable. \\
(2) For every $\theta \in (0,1)$, every finite subset $D\subseteq G$, and every identity neighbourhood $U$, 
there is a finite non-empty subset $F\subseteq G$ such that 
$$
\forall g\in D (\mu (F,gF,U) \ge \theta |F|). 
$$ 
(3) There exists $\theta \in (0,1)$ such that for every finite subset $D\subseteq G$, and every identity neighbourhood $U$, 
there is a finite non-empty subset $F\subseteq G$ such that 
$$
\forall g\in D (\mu (F,gF,U) \ge \theta |F|). 
$$
}
\end{quote} 
It is worth noting here that when an open neighbourhood $V$ contains $U$ 
the number  $\mu (F,gF,U)$ does not exceed $\mu (F,gF,V)$. 
In particular, in the formulation above we may consider neighbourhoods $U$ 
from a fixed base of identity neighbourhoods. 
For example in the case of a right-invariant metric group $(G, d)$ 
we may take all $U$ in the form of metric balls 
$B_{<q} = \{ x : d(1,x) < q\}$, $q\in \mathbb{Q}\cap (0,1)$.  
It is also clear that we can restrict all $\theta$ by rational ones. 
From now on we work in this case. 

Let $B_q = \{ x : d(1,x) \le q\}$, $q\in \mathbb{Q}\cap (0,1)$.  
Notice that the corresponding versions of statement (2) above 
are equivalent for $U$ of the form $B_{<q}$ and of the form $B_q$. 
Indeed, this follows from the observation that 
 $\mu (F,gF,B_{<q})\le \mu (F,gF,B_q )$ and 
  $\mu (F,gF,B_q )\le \mu (F,gF,B_{<r})$ for $q<r$. 

The following observation is an important point used in our approach. 
\begin{quote} 
$\bullet$ 
It is well-known that a topological group is amenable if and only if it contains a dense amenable subgroup. 
Furthermore, then every dense subgroup is amenable. 
In particular, the formulation of the Schneider-Thom theorem given above still holds if in conditions (2) and (3) 
the group $G$ is replaced by a fixed dense subgroup.  
\end{quote} 
\begin{rem} 
When $G$ is a contable group considered with respect to the $\{ 0,1 \}$-metric, 
the set $U$ appearing in (2) and (3) can be taken $U = \{ e \}$. 
Note that in this case the number $\mu (F_1 ,F_2 ,U)$ is just $|F_1 \cap F_2 |$. 
In particular, taking $\theta = \frac{n-1}{n}$ the condition that the subset $F \subset G$ satisfies 
$$
\forall g\in D (\mu (F,gF,U) \ge \theta |F|)  
$$
just means that $F$ is $\frac{1}{n}$-F\o lner with respect to $D$.  
\end{rem} 
This remark suggests calling $F$ with 
$$
\forall g\in D (\mu (F,gF,B_{<q}) \ge \frac{n-1}{n} |F|)  
$$
to be $\frac{1}{n}$-{\em F\o lner} with respect to $D$ and $q$.  
Now we will say that a sequence $(F_j)_{j\in \mathbb{N}}$ 
of non-empty finite subsets of $G$ is a {\em F\o lner sequence} 
if for every $g \in G$ the following condition holds:
\begin{equation}
 \lim\limits_{n\rightarrow \infty} \mu (F_n,gF_n,B_{<\frac{1}{n}}) = 1 .
\end{equation} 
It is easy to see that existence of F\o lner sets for all $n$ and $D$  
is equivalent to existence of a F\o lner sequence. 
In fact, this is the theorem of Schneider and Thom stated above. 
It can be viewed as  {\em the metric version of F\o lner condition of amenability}.

\bigskip 

We can now formalize computable amenability of numbered metric groups. 
Assume that $(G,d, \nu)$ is a numbered, right-invariant metric group such that $\nu (\mathbb{N})$ is a dense subgroup of $G$. 
The situation that $G=\nu (\mathbb{N})$ and $d$ is the $\{ 0,1\}$-metric, is possible.

When $D\subset_{fin} \mathbb{N}$ and $m,n \in \mathbb{N}$, let us denote by  $\mathfrak{F} \o l_{G,\nu(D), m}(n)$ the family of all finite subsets $F \subset \mathbb{N}$ which satisfy 
$$
\forall e\in D \, \,  (\mu (\nu(F),\nu (e) \nu(F),B_{<\frac{1}{m}}(1)) \ge \frac{n-1}{n} |\nu (F) |)  .
$$
From the point of view of the terminology of Sections 2 and 3, it is more natural to include into this family sets $\nu (F)$ instead of $F$. 
However this would be slightly incovenient below.  

In the case of the Schneider-Thom version of F\o lner's condition of amenability, we again consider two types of effectiveness: they correspond to ones from Section 3. 
\begin{df}\label{cema} 
The numbered metric group $(G,d,\nu )$ is $\Sigma$-{\em amenable}, if there is an algorithm which 
for all triples $(m,n,D)$ where $m,n\in \mathbb{N}$ and 
$D\subset_{fin}\mathbb{N}$, finds a set $F\subset_{fin}\mathbb{N}$ having $F' \subseteq F$ with $F'\in \mathfrak{F} \o l_{G,\nu(D), m}(n)$. 
\end{df}
\begin{rem} 
In the case when the group $(G,d,\nu)$ is discrete with the $\{ 0,1\}$-metric we arrive at the formulation that 
it is $\Sigma$-amenable if
there exists an algorithm which for all pairs $(n,D)$, where $n\in \mathbb{N}$ and 
$D\subset_{fin} \mathbb{N}$, finds a set $F\subset_{fin}\mathbb{N}$ with 
$ \nu(F') \in \mathfrak{F} \o l_{G,\nu(D)}(n)$ where $F'\subseteq F$. 
This is exactly Definition \ref{cea}. 
\end{rem}

\begin{df}\label{cma} 
The numbered metric group $(G,d,\nu )$ is {\em computably amenable}, if there is an algorithm which 
for all quadrangles $(\ell ,m,n,D)$ where $\ell ,m,n\in \mathbb{N}$ and 
$D\subset_{fin}\mathbb{N}$, finds a set $F \in \mathfrak{F} \o l_{G,\nu(D), m}(n)$ with 
$|F|=|\nu(F)|$, together with an assignament $(i,j) \to q$ where $i,j\in F$, $q\in \mathbb{Q}^+$ and  
$d(\nu (i), \nu (j)) \in [q, q+ \frac{1}{\ell})$. 
 \end{df} 

\begin{rem} 
It is easy to see that in the discrete case the group $(G,d,\nu)$ is computably amenable if and only if 
it satisfies  Definition \ref{ca}. 
\end{rem}

The discussion after the formulation of the Schneider-Thom theorem implies that  for a numbered metric group, 
$\Sigma$-amenability implies amenability of $G$. 
It is clear that computable amenability 
%implies computability of F\o lner sets and the latter condition 
imples $\Sigma$-amenability. 
Furthermore, $\Sigma$-amenability implies that the F\o lner function is subrecursive.

\subsection{Effective amenability of computable metric groups}

The following observation shows that amenable groups which have good computable presentation have computable F\o lner sets. 

\begin{pr} \label{T=vsCA}
Assume that a computably enumerable metric group $(G,d,\nu )$ has decidable equality relation: the set $\mathsf{T}_{=} = \{ (i,j) \, : \nu (i) = \nu (j) \}$ is computable. 
Then amenability of $G$ implies that $(G,d,\nu )$ has computable F\o lner sets, which means the following property: 
\begin{quote} 
there is an algorithm which for all triples $(m,n,D)$ where $m,n\in \mathbb{N}$ and 
$D\subset_{fin}\mathbb{N}$, finds a set $F\in \mathfrak{F} \o l_{G,\nu(D), m}(n)$. 
\end{quote} 
\end{pr}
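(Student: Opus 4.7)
The plan is to run a dovetailed enumerate-and-verify search over finite subsets $F\subset\mathbb{N}$: for each candidate I would attempt, using only the c.e.\ data supplied by the presentation, to certify that $F\in\mathfrak{F}\o l_{G,\nu(D),m}(n)$. By the version of the Schneider--Thom theorem discussed right after its statement, applied to the dense amenable subgroup $\nu(\mathbb{N})$ of $G$, some such $F$ must exist, so the search will terminate.

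The first step is to exploit the hypothesis on $\mathsf{T}_{=}$. Since $\mathsf{T}_{=}$ is computable, from any finite $F\subset\mathbb{N}$ one computes a transversal $F_0\subseteq F$ on which $\nu$ is injective and $\nu(F_0)=\nu(F)$; in particular $|\nu(F)|$ and the target $\theta_F := \lceil\frac{n-1}{n}|\nu(F)|\rceil$ are computable. The second step is to describe the verification for a single pair $(F,e)$ with $e\in D$. By right-invariance of $d$, an edge of the bipartite graph $\Gamma_{F,e}$ with parts $\nu(F_0)$ and $\nu(e)\nu(F_0)=\nu(e*F_0)$ is a pair $(\nu(i),\nu(e*j))$ with $d(\nu(i),\nu(e*j))<1/m$; this relation is c.e.\ because $\mathsf{Wrd}^<_\nu$ is c.e. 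Enumerating $\mathsf{Wrd}^<_\nu$ we add edges to $\Gamma_{F,e}$ one by one, and at every stage the matching number of the partial graph is computable by any standard bipartite matching algorithm and is a monotone lower bound on $\mu(\nu(F),\nu(e)\nu(F),B_{<1/m}(1))$. We wait until the partial matching number reaches $\theta_F$ for every $e\in D$; when this happens for some $F$, we output $F$.

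Termination follows from the fact, stated in the paper just after the formulation of the Schneider--Thom theorem, that a topological group is amenable iff any one of its dense subgroups is amenable, so the theorem applies inside $\nu(\mathbb{N})$: amenability of $G$ yields a finite non-empty $S\subseteq\nu(\mathbb{N})$ with $\mu(S,\nu(e)S,B_{<1/m}(1))\ge\frac{n-1}{n}|S|$ for every $e\in D$. Any $\nu$-preimage $F\subseteq\mathbb{N}$ of $S$ is then a legitimate candidate, and on this $F$ all of the one-sided verifications above will eventually succeed.

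The main obstacle, and the reason for the $\mathsf{T}_{=}$ hypothesis, is that the condition defining $\mathfrak{F}\o l_{G,\nu(D),m}(n)$ is not obviously decidable nor even semi-decidable at first sight: the negation of an edge, $d(\nu(i),\nu(e*j))\ge 1/m$, is only $\Pi^0_1$, so we cannot refute membership effectively. The key observation is that we never need to refute, only to confirm, and the matching-number inequality is a lower bound, hence certified by a finite matching, hence by finitely many c.e.\ events. Computability of $\mathsf{T}_{=}$ is exactly what makes the right-hand side $|\nu(F)|$ effectively available, so that the whole F\o lner condition becomes a genuine $\Sigma^0_1$ property of $F$ which the parallel search can resolve.
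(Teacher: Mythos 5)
Your proposal is correct and follows essentially the same route as the paper's proof: use decidability of $\mathsf{T}_{=}$ to extract a transversal on which $\nu$ is injective (making $|\nu(F)|$ and the threshold computable), enumerate the c.e.\ distance data to build up the bipartite graphs edge by edge, observe that the matching number is monotone non-decreasing under this enumeration so that membership in $\mathfrak{F}\o l_{G,\nu(D),m}(n)$ is a $\Sigma^0_1$ event, and dovetail over all finite $F$ with termination guaranteed by the Schneider--Thom criterion applied to the dense subgroup $\nu(\mathbb{N})$. The only cosmetic difference is that the paper takes representatives inside $F\cup D*F$ and returns $F_0\cap F$, while you take the transversal inside $F$ alone; this does not affect the argument.
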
 

 Before the proof we give several remarks. 

\begin{rem} \label{cf}  
(1) The assumptions of the first sentence of this proposition imply that the set $\mathsf{Wrd}^{=}_{\nu}$ (Definition \ref{df11}) 
is computable.  
To see this use the assumption that the  multiplication $\star$ on $\mathsf{N}$ is a computable function. 

\noindent 
(2) It is clear that when $(G,d,\nu )$ has computable F\o lner sets, it is $\Sigma$-amenable. 
On the other hand computable amenability implies having computable F\o lner sets.  

\noindent
(3) Note that in the discrete case $(G,d,\nu)$ has computable F\o lner sets 
if there exists an algorithm which, for all pairs $(n,D)$, 
where $n\in \mathbb{N}$ and $D\subset_{fin}\mathbb{N}$, 
finds a finite set $F\subset\mathbb{N}$ such that 
$\nu(F) \in \mathfrak{F}$\o$l_{G,\nu(D)}(n)$.
\end{rem}

\bigskip 

\noindent 
\begin{proof} 
Let us fix an enumeration of all quadrangles of the form $(m, n, D, F)$ where $m,n\in \mathbb{N}$ and 
$D, F \subset_{fin}\mathbb{N}$.
The following procedure, denoted below by $\hat{\vartheta} (m,n,D,F)$, determines quadrangles with $F$ satisfying the condition of the proposition.

For an input  $(m,n,D,F)$ let $F_0  \subseteq F\cup D\star F$ be a set representing the $\mathsf{T}_{=}$-classes in $F\cup D\star F$. 
 Let us fix an enumeration of the set 
$$
\mathsf{T}^{<}_{F_0} = \{(n_1,n_2 ,q): \, n_1 , n_2 \in F_0   \, , \, q \in \mathbb{Q} \cap (0,1) \, , \, d(\nu(n_1),\nu(n_2))<q \}. 
$$ 
After the $m$-th step of this enumeration we obtain a set of restrictions on $d$ in $F_0$.  
Having this we verify whether these restrictions enforce the condition 
$$
\forall g\in D \, \,  (\mu (\nu(F_0 \cap F),\nu (g) \nu(F_0\cap F),B_{<\frac{1}{m}}(1)) \ge \frac{n-1}{n} |F_0 \cap F|)  
$$ 
If this is not the case we make the next step. 
Note that the number $\mu (\nu(F_0 \cap F),\nu (g) \nu(F_0 \cap F),B_{<\frac{1}{m}}(1))$ does not decrease. 
Indeed, since the distances are becoming smaller, new matchings can occur, but the matchings which were already found, 
can only increase to larger ones. 
We stop when the above inequality holds. 
Note that if in the numbered metric group $(G,d,\nu )$ the condition 
$$
\forall g\in D \, \,  (\mu (\nu(F),\nu (g) \nu(F),B_{<\frac{1}{m}}(1)) \ge \frac{n-1}{n} |\nu(F) |)  
$$ 
holds, then it would be recognized at some step of the procedure $\hat{\vartheta}(m,n,E,F)$. 
Under the notation above, then the set $F_0 \cap F$ would serve as the corresponding F\o lner set.  

The algorithm for F\o lner sets of $(G,d,\nu )$ looks as follows. 
Having an input $(m,n,D)$ we enumerate all finite $F$ and for each of them start the procedure  $\hat{\vartheta} (m,n,E,F)$. 
By amenability of $(G,d)$ for some $F$ such a procedure would give the result.  
\end{proof}

\bigskip 

The authors think that the statement of Proposition \ref{T=vsCA} can not be strenthenned to the condition of computable amenability. 
However, at the moment we do not  have any counterexample. 
The following theorem is a metric version of Theorem \ref{ce}. 
This is Theorem 4 from Introduction. 

\begin{thm}\label{CAvsC+A}
Let $(G,d,\nu)$ be a computably enumerable metric group. 
The following conditions are equivalent: 
\begin{enumerate}[(i)]
\item $(G,d, \nu)$ is amenable and computable; 
\item $(G,d, \nu)$ is computably amenable (Definition \ref{cma}). 
\end{enumerate}
\end{thm}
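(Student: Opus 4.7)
The plan is to mirror the proof of Theorem~\ref{ce}, replacing the exact equalities $\nu(n_1)\nu(f)=\nu(f')$ of the discrete case with the approximate identities $d(\nu(g)\nu(f'),\nu(f))<1/m$ produced by the Schneider--Thom matching condition. For $(i)\Rightarrow(ii)$, computability of $(G,d,\nu)$ makes $\mathsf{T}_{=}$ decidable, so by Proposition~\ref{T=vsCA} there is an algorithm producing, on input $(\ell,m,n,D)$, a set $F\in\mathfrak{F}\o l_{G,\nu(D),m}(n)$. I would thin $F$ to a transversal $F'\subseteq F$ of the $\mathsf{T}_{=}$-classes, so that $|F'|=|\nu(F')|$ while $\nu(F')=\nu(F)$ and $F'\in\mathfrak{F}\o l_{G,\nu(D),m}(n)$. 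The distance-approximation algorithm granted by the characterization after Remark~\ref{comp_rec} then supplies the required assignment $(i,j)\mapsto q$ with $d(\nu(i),\nu(j))\in[q,q+1/\ell)$ on $F'\times F'$.

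The substantive direction is $(ii)\Rightarrow(i)$. Amenability is immediate from Definition~\ref{cma}, so the content is computability, which by the characterization after Remark~\ref{comp_rec} reduces to giving an algorithm that approximates $d(\nu(n_1),\nu(n_2))$ to any precision $1/k$. Set $g:=n_1*n_2^{-1}$ in the enumerated group, so $\nu(g)=\nu(n_1)\nu(n_2)^{-1}$. Take $m=4k$, $\ell=2k$, $n=2$, and apply the computable-amenability algorithm to $(\ell,m,n,\{g\})$ to obtain a set $F$ with $|F|=|\nu(F)|$ together with the distance assignment on $F\times F$ to precision $1/\ell$. The F\o lner condition supplies a partial injective map $\psi:F\to F$ of size $\geq|F|/2$ with $d(\nu(g)\nu(\psi(f)),\nu(f))<1/m$ for every $f\in\mathsf{dom}(\psi)$; this $\psi$ is not returned by the algorithm, so I would recover it by dovetailing the enumeration of partial injective maps $F\to F$ of the required size against the enumeration of the c.e.\ relation $\mathsf{Wrd}^{<}_{\nu}$, applied to pairs $(g*\psi(f),f)$, and stopping as soon as some candidate passes every distance check.

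For any $f\in\mathsf{dom}(\psi)$, set $f':=\psi(f)$. Right-multiplying the matching inequality by $\nu(f')^{-1}$ and using right-invariance of $d$ gives
\[
d\bigl(\nu(g),\,\nu(f)\nu(f')^{-1}\bigr)<\tfrac{1}{m}.
\]
Since $d(\nu(g),1)=d(\nu(n_1),\nu(n_2))$ and $d(\nu(f)\nu(f')^{-1},1)=d(\nu(f),\nu(f'))$ (both by right-invariance), the reverse triangle inequality $|d(x,1)-d(y,1)|\leq d(x,y)$ yields
\[
\bigl|\,d(\nu(n_1),\nu(n_2))-d(\nu(f),\nu(f'))\,\bigr|<\tfrac{1}{m}.
\]
Reading $d(\nu(f),\nu(f'))\in[q,q+1/\ell)$ from the assignment and outputting $q':=\max(0,q-1/m)$ then places $d(\nu(n_1),\nu(n_2))$ in $[q',q'+1/k)$, since the total error $2/m+1/\ell$ equals $1/k$.

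The main obstacle is twofold: Definition~\ref{cma} asserts only existence of the F\o lner matching, so $\psi$ must be reconstructed by an external search; and right-invariance of $d$ (rather than bi-invariance) forces the comparison to be routed through $g=n_1*n_2^{-1}$ and the identity-based norm $\|\nu(h)\|:=d(\nu(h),1)$, so that both $d(\nu(n_1),\nu(n_2))$ and $d(\nu(f),\nu(f'))$ appear as norms controlled by a single matching inequality, rather than being compared via two matchings and a pigeonhole argument as in the discrete proof of Theorem~\ref{ce} (that route breaks under mere right-invariance because left translation by $\nu(n_2)$ is not an isometry).
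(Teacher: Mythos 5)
Your proof is correct, and while the direction (i)$\Rightarrow$(ii) and the overall skeleton of (ii)$\Rightarrow$(i) (reduce computability to approximating $d(\nu(n_1),\nu(n_2))$; run the computable-amenability algorithm on a well-chosen $D$; reconstruct the unseen matching by dovetailing against the c.e.\ relation $\mathsf{Wrd}^{<}_{\nu}$; read the answer off the supplied assignment $(i,j)\mapsto q$) coincide with the paper's, your key step is genuinely different. The paper takes $D=\{n_1,n_2\}$ with $\theta=\frac{n-1}{n}\ge\frac{2}{3}$, obtains two families of near-matching pairs $\Sigma_1,\Sigma_2$, and uses the pigeonhole principle to find a common $f$ with $(f,f')\in\Sigma_1$, $(f,f'')\in\Sigma_2$; the triangle inequality then bounds $d(\nu(n_1)\nu(f),\nu(n_2)\nu(f))$ by $d(\nu(f'),\nu(f''))$ up to $2/m$, and right-invariance cancels $\nu(f)$ to give $d(\nu(n_1),\nu(n_2))$. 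You instead take the singleton $D=\{n_1*n_2^{-1}\}$ with $\theta=\frac{1}{2}$, need only one matched pair, and compare the two "norms" $d(\nu(g),1)$ and $d(\nu(f)\nu(f')^{-1},1)$ via the reverse triangle inequality. Your route buys a smaller $D$, a weaker F\o lner constant, and no pigeonhole; the paper's route has the virtue of being a verbatim metric deformation of the proof of Theorem~\ref{ce} (where the dichotomy $f'=f''$ versus $f'\neq f''$ decides equality exactly). One point to correct: your closing claim that the paper's two-matching route "breaks under mere right-invariance because left translation by $\nu(n_2)$ is not an isometry" is mistaken — that route never translates on the left; it only needs $d(\nu(n_1)\nu(f),\nu(n_2)\nu(f))=d(\nu(n_1),\nu(n_2))$, which is precisely right-invariance, so both arguments work under the paper's standing hypothesis. (Both proofs also tacitly assume the returned $F$ is non-empty, which is harmless.)
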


\begin{proof}
(i)$\implies$(ii). This follows from Proposition \ref{T=vsCA} and a straightforfard argument using Remark \ref{comp_rec}.  

(ii)$\implies$(i).
Our proof is based on the proof of Theorem \ref{ce} (and thus it is slightly related to the construction of Theorem 4.1 from \cite{MC3}).
It is clear that the existence of an algorithm for (ii) (i.e. from Definition \ref{cma}) implies amenability of $(G,d,\nu)$. 
Therefore we only need to show that $(G,d,\nu)$ is computable.
It suffices to present an agorithm such that for any $n_1,n_2  \in \mathbb{N}$ and $\varepsilon\in \mathbb{Q}^+$, 
it finds a rational number $q_0 \ge 0$ such that $d(\nu(n_1), \nu(n_2) )\in [ q_0,  q_0 +\varepsilon)$.

Fix $n_1,n_2$. 
Let $D$ be the set $\{n_{1},n_{2}\}$. 
We apply the algorithm for (ii)  to some $(\ell, m,n,D)$ where $\frac{2}{\ell} \le \varepsilon$ , $m> \frac{4}{\varepsilon}$ 
and $\frac{3}{5} \le \frac{n-1}{n}$. 
Let $F\subset \mathbb{N}$ be a set which is the output of the algorithm and 
let $\Sigma^{\ell}_{F} = \{ (f,f',q) \, | \, f,f' \in F, \, q \in \mathbb{Q}^+ , d(\nu (f),\nu (f')) \in [q, q+ \frac{1}{\ell}) \}$.  

For each $i\in \{ 1,2 \}$ we define $\Sigma_i \subseteq \{ (f,f') \, | \, d(\nu (n_i) \nu (f),\nu (f'))\le \frac{1}{m} , f,f'\in F \}$ by the following procedure. 
Having $f,f'\in F$ apply the algorithm of enumeration of the set $\mathsf{Wrd}^{\le}_{\nu}$ for verification if 
\[ 
d(\nu (n_i \star f), \nu(f') )< \frac{1}{m} \, , \, i=1,2. 
\]
When we get a confirmation of this inequality, we extend the corresponding $\Sigma_i$ by $(f,f')$. 
We apply it simultaneously to each pair $(f,f')$.  
Since 
$$
\forall g\in D \, \,  (\mu (\nu(F),\nu (g) \nu(F),B_{<\frac{1}{m}}(1)) \ge \frac{n-1}{n} |\nu (F) |)  
$$ 
and $|F|=|\nu(F)|$, there is a step of these computations when $\Sigma_1 \cup \Sigma_2$ 
confirms the existence of matchings witnessing this inequality. 
Having this, we stop the procedure. 

By the choice of $n$ there are pairs $(f,f') \in \Sigma_1$ and $(f,f'')\in \Sigma_2$. 
Let $(f',f'',q)\in \Sigma^{\ell}_F$. 
Then we have that $d(\nu (n_1 )\nu (f), \nu (n_2 )\nu (f)) \in [q-\frac{1}{m}, q+\frac{1}{\ell}+\frac{1}{m})$. 
Since $d$ is right invariant we see 
\[ 
d(\nu (n_1 ), \nu (n_2 )) \in [q-\frac{1}{m}, q+\frac{1}{\ell}+\frac{1}{m}). 
\] 
In particular $q- \frac{1}{m}$ serves as the requred $q_0$. 
\end{proof}

As in the case of Theorem \ref{ce} we have the following interesting observation. 

\begin{cor} 
Let $(G,d,\nu)$ be a computably enumerable, amenable group. 
If for some $n\geq 5$ there exists an algorithm, which for every $\varepsilon >0$ and $D\subset_{fin}\mathbb{N}$ with $|D| =2$,  finds a set $F\subset_{fin}\mathbb{N}$ such that $F \in F \o l_{G, \nu(D),\frac{5}{\varepsilon}}(n)$ and $|F|=|\nu(F)|$, together with an assignment as in the formulation of Definition \ref{cma} for $\ell \ge \frac{2}{\varepsilon}$, then $(G,d, \nu )$ is computable.
\end{cor}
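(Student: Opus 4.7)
The plan is to follow the (ii)$\Rightarrow$(i) direction of the proof of Theorem~\ref{CAvsC+A} almost verbatim, observing that the full strength of Definition~\ref{cma} was not actually used there. In that proof, the computable amenability algorithm is invoked only on inputs $D$ of size $2$ (namely $D=\{n_1,n_2\}$), for a single fixed $n\ge 3$ chosen so that $\tfrac{n-1}{n}\ge \tfrac{2}{3}$, and with parameters $\ell,m$ chosen in terms of the target precision $\varepsilon$ in exactly the form appearing in the corollary. So the weaker hypothesis available here already suffices.

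Concretely, by Remark~\ref{comp_rec} I would reduce computability of $(G,d,\nu)$ to the task of designing an algorithm that, given $n_1,n_2\in\mathbb{N}$ and $\varepsilon\in\mathbb{Q}^+$, returns a rational $q_0$ with $d(\nu(n_1),\nu(n_2))\in[q_0,q_0+\varepsilon)$. I would then set $D=\{n_1,n_2\}$, apply the hypothetical algorithm to $(\ell,m,n,D)$ with $m=\tfrac{5}{\varepsilon}$ and $\ell\ge \tfrac{2}{\varepsilon}$, and receive a finite $F\subset\mathbb{N}$ with $|F|=|\nu(F)|$, $F\in \mathfrak{F}\o l_{G,\nu(D),m}(n)$, and an assignment $(i,j)\mapsto q_{ij}$ recording pairwise distances in $\nu(F)$ up to precision $\tfrac{1}{\ell}$.

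The next step is to enumerate $\mathsf{Wrd}^<_\nu$ in order to compute the sets
\[
\Sigma_i=\{(f,f')\in F\times F \, : \, d(\nu(n_i * f),\nu(f'))<\tfrac{1}{m}\},\quad i=1,2,
\]
waiting until each is seen to contain an injective matching of domain size at least $\tfrac{2}{3}|F|$; such matchings exist in $G$ by the F\o lner property of $F$ guaranteed by the output, so they are certified in finite time. A pigeonhole argument on the domains then yields some $f\in F$ together with $f',f''\in F$ with $(f,f')\in\Sigma_1$ and $(f,f'')\in\Sigma_2$. Combining $q_{f'f''}$ with two applications of the triangle inequality and the right-invariance of $d$, just as at the end of the proof of Theorem~\ref{CAvsC+A}, locates $d(\nu(n_1),\nu(n_2))$ inside a rational interval of width bounded by $\varepsilon$, and its left endpoint (truncated at $0$ if negative) is the desired $q_0$.

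The main obstacle is essentially bookkeeping: verifying that the original proof's use of Definition~\ref{cma} genuinely fits within the weaker hypothesis of the corollary. The only subtle point is the parameter balance in the final triangle-inequality estimate, where the error sources $\tfrac{2}{m}$ (from the two approximate matchings) and $\tfrac{1}{\ell}$ (from the precision of the assignment) must both be $O(\varepsilon)$; the explicit values $m=\tfrac{5}{\varepsilon}$ and $\ell\ge \tfrac{2}{\varepsilon}$ in the statement are tailored exactly to absorb these contributions. No fresh ideas beyond those of Theorem~\ref{CAvsC+A} are needed.
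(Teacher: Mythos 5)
Your proposal is correct and is essentially the paper's own argument: the paper derives this corollary by observing that the (ii)$\Rightarrow$(i) direction of Theorem \ref{CAvsC+A} only ever invokes the computable-amenability algorithm on two-element sets $D=\{n_1,n_2\}$ with a single $n\ge 3$ and with $m>\frac{4}{\varepsilon}$, $\ell\ge\frac{2}{\varepsilon}$, which the hypotheses here supply. Your parameter check ($\frac{1}{\ell}+\frac{2}{m}\le\frac{\varepsilon}{2}+\frac{2\varepsilon}{5}<\varepsilon$) confirms the width of the final interval, matching the paper's intended reasoning.
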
 

The following corollary corresponds to Theorem \ref{eq}. 

\begin{cor}\label{meq}
Let $(G,d, \nu)$ be a computable group. 
Then the following conditions are equivalent:

\begin{enumerate}[(i)]
\item $(G,d)$ is amenable; 

\item $(G,d, \nu)$ is computably amenable;

\item $(G,d, \nu)$ has computable F\o lner sets;

\item $(G,d, \nu)$ is $\Sigma$-amenable.  

%\item $(G,\nu)$ has computable (F\o lner) Reiter functions;
\end{enumerate}
\end{cor}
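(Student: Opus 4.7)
The plan is to close the cycle of implications (i) $\Rightarrow$ (ii) $\Rightarrow$ (iii) $\Rightarrow$ (iv) $\Rightarrow$ (i) using the results already established in this subsection and the discussion surrounding Definition \ref{cma}. Since $(G,d,\nu)$ is assumed to be a computable group, in particular it is computably enumerable and the sets $\mathsf{Wrd}^{=}_{\nu}$ and $\mathsf{Wrd}^{<}_{\nu}$ are computable, so the decidability hypotheses needed for the earlier theorems are available for free.

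For (i) $\Rightarrow$ (ii), I would invoke Theorem \ref{CAvsC+A}: computable enumerability together with amenability and computability of $(G,d,\nu)$ gives computable amenability directly. Alternatively, one can first derive (iii) from (i) via Proposition \ref{T=vsCA}, which applies because computability of $(G,d,\nu)$ entails that $\mathsf{T}_{=}$ is computable (decidability of $\mathsf{Wrd}^{=}_{\nu}$ specializes to deciding $\nu(n_1)=\nu(n_2)$). Thus the implication (i) $\Rightarrow$ (iii) is also immediate.

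For (ii) $\Rightarrow$ (iii), I would appeal to Remark \ref{cf}(2): an algorithm witnessing computable amenability, upon input $(m,n,D)$, already outputs a finite set $F$ belonging to $\mathfrak{F}\o l_{G,\nu(D),m}(n)$ (one simply ignores the additional $\ell$-parameter and the distance assignment). The implication (iii) $\Rightarrow$ (iv) is also contained in Remark \ref{cf}(2): a computable procedure producing a Følner set $F$ for $(m,n,D)$ trivially yields a finite superset $F$ of itself that contains an $F' \subseteq F$ satisfying the $\Sigma$-amenability condition, so Definition \ref{cema} holds.

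Finally, (iv) $\Rightarrow$ (i) follows from the general observation made right after Definition \ref{cma}, namely that $\Sigma$-amenability implies amenability of $G$ (indeed, the algorithm produces for every $(m,n,D)$ a finite set playing the role of a Følner set in the sense of the Schneider–Thom criterion, which is the F\o lner characterization of amenability stated in Section 5.2). There is no real obstacle here: all four implications are bookkeeping built on Theorem \ref{CAvsC+A}, Proposition \ref{T=vsCA}, and Remark \ref{cf}. The only subtle point to double-check is that the definition of computable amenability (Definition \ref{cma}) genuinely subsumes the weaker notion in (iii), which is precisely the content of Remark \ref{cf}(2) and requires no additional argument.
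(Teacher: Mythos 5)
Your proposal is correct and follows essentially the same route as the paper: the paper's proof cites Theorem \ref{CAvsC+A} for (i)$\Rightarrow$(ii) and Proposition \ref{T=vsCA} for (iv)$\Rightarrow$(iii) and declares the remaining implications easy, which are exactly the bookkeeping steps (via Remark \ref{cf}(2) and the observation that $\Sigma$-amenability implies amenability) that you spell out in your cycle.
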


\begin{proof}
By Theorem \ref{CAvsC+A} we have (i)$\Rightarrow$(ii) and by Proposition \ref{T=vsCA} we have (iv)$\Rightarrow$(iii). 
The remaining implications are easy to see. 
\end{proof}

Comparing this corollary with Theorem \ref{eq} the reader observes that we do not include here any statement concerning Reiter's functions. 
At the moment the authors do not have any metric version of Section 3.1. 
This looks as a non-trivial task.

\bigskip 

Let us consider {\bf F\o lner sequences in computable metric groups} . 
Let $(G,d,\nu)$ be a computable metric group. 
By Remarks \ref{comp_m-gr} and \ref{comp_rec} we may assume that the function $\nu$ is injective. 
Therefore we identify the set $\nu(\mathbb{N})$ with $\mathbb{N}$ and subsets $F$ of $\mathbb{N}$ with $\nu(F)\subset G$.

As in Section 4 an {\em effective F\o lner sequence} of the group $(G,d,\nu)$ is an effective sequence $(n_j)_{j\in\mathbb{N}}$ where each $n_j$ is a G\"odel number of some $F_j$, such that $(F_j)_{j\in\mathbb{N}}$ is a F\o lner sequence in $\nu (\mathbb{N} )$ 
(i.e. $\mathbb{N}$). 

By Theorem \ref{CAvsC+A},  amenability of $(G,d,\nu)$ is equivalent to computable amenability. 
This is also equivalent to existence of an effective F\o lner sequence. 
Indeed, apply the argument given in beginning of Section 4.1.  

Let $\varphi(x,y)$ be a universal recursive function, and $\varphi_x(y)=\varphi(x,y)$ be the recursive function with the number $x$. 
We identify effective F\o lner sequences with numbers of recursive functions which produce these sequences.
The set of these numbers will be denoted by $\mathfrak{F}_{seq}(G,d,\nu )$.
The description of this set in the arithmetical hierarchy (see Theorem \ref{fs}) 
has the following counterpart. 
\begin{quote} 
Let $(G,d,\nu)$ be a computable group. 
The set of all effective F\o lner sequences of $(G,d,\nu)$ belongs to the class $\Pi^0_3$. 
\end{quote} 

\noindent
Indeed, it is straightforward that $m\in \mathfrak{F}_{seq} (G)$ if and only if the following formula holds:
\begin{align}
(\phi(m,y) &\text{ is a total function})\wedge\notag
(\forall g\in \nu (\mathbb{N}))(\forall n)(\exists l)(\forall k,f)\Big(k>l\wedge (\phi(m,k)=f)\\ 
&\wedge(\mbox{$f$ is a G\" odel number of $F$})\rightarrow 
 \, \,  ( \mu (F,g F,B_{<\frac{1}{n}}(1)) \ge \frac{n-1}{n} | F | \Big) , \label{seq} 
\end{align}

\noindent 
Given number $f$ the inequality $ \mu (F,g F,B_{<\frac{1}{n}}(1)) \ge \frac{n-1}{n} | F | $ can be verified effectively. 
Since the set of numbers of all total functions belongs to the class $\Sigma_2^0$ it is easy to see that the set of all $m$ which satisfy (\ref{seq}) is a $\Pi_3^0$ set.

\begin{flushleft}
\begin{footnotesize} 
Karol Duda, UPV/EHU, 	Leioa, Bizkaia, Spain 

kduda@impan.pl 

\bigskip 
Aleksander Iwanow

Institute of Computer Science, University of Opole, 

ul. Oleska 48, 45 - 052 Opole, Poland 

aleksander.iwanow@uni.opole.pl 
\end{footnotesize}
\end{flushleft}

\end{document}